\newcommand{\cB}{\mathcal{B}}
\newcommand{\cC}{\mathcal{C}}
\newcommand{\cM}{\mathcal{M}}
\newcommand{\cN}{\mathcal{N}}
\newcommand{\cP}{\mathcal{P}}
\newcommand{\cU}{\mathcal{U}}
\newcommand{\cV}{\mathcal{V}}
\newcommand{\fR}{\mathbb{R}}
\newcommand{\fZ}{\mathbb{Z}}
\newcommand{\fN}{\mathbb{N}}
\theoremstyle{plain}
\newtheorem{stam}{STAM}[section]
\newtheorem{lem}[stam]{Lemma}
\newtheorem{thm}[stam]{Theorem}
\newtheorem{prop}[stam]{Proposition}
\newtheorem{cor}[stam]{Corollary}
\newtheorem*{lem*}{Lemma}
\newtheorem*{thm*}{Theorem}
\newtheorem*{prop*}{Proposition}
\newtheorem*{cor*}{Corollary}
\theoremstyle{definition}
\newtheorem{definition}[stam]{Definition}
\newtheorem*{definition*}{Definition}
\theoremstyle{remark}
\numberwithin{equation}{section}
\newenvironment{alist}{\begin{list}{(\alph{ctr})}{\usecounter{ctr}}}{\end{list}}
\begin{document}
\newcounter{ctr}
\title{Measure Theoretical Entropy of Covers}
\author{Uri Shapira}
\thanks{* Part of the author's MS.c thesis at the Hebrew University of Jerusalem.\\ Email: ushapira@gmail.com}
\begin{abstract}
In this paper we introduce three notions of measure theoretical
entropy of a measurable cover $\cU$ in a measure theoretical
dynamical system. Two of them were already introduced in [R] and
the new one is defined only in the ergodic case. We then prove
that these three notions coincide, thus answering a question posed
in [R] and recover a variational inequality (proved in [GW]) and a
proof of the classical variational principle based on a comparison
between the entropies of covers and partitions.
\end{abstract}

\maketitle

\section{Introduction}
In this paper a measure theoretical dynamical system (m.t.d.s) is a four
tuple $(X,\cB,\mu,T)$, where $(X,\cB)$ is a standard space (i.e isomorphic
to $[0,1]$ with the Borel $\sigma -algebra$ ,$\mu$ is a probability
measure on $(X,\cB)$ and $T$ is an invertible measure preserving map from
$X$ to itself.

A topological dynamical system (t.d.s) is a pair $(X,T)$, where $X$ is a
compact metric space and $T$ is a homeomorphism from $X$ to itself.

In [R] the author introduced two notions of measure theoretical entropy of
a
cover, both generalizing the definition of measure theoretical entropy of a
partition and influenced by [BGH]. Namely,
\begin{enumerate}

\item $h^+_\mu(\cU)=inf_{\alpha \succeq \cU}h_\mu(\alpha)$
\item $h^-_\mu(\cU)=lim\frac{1}{n}inf_{\alpha \succeq
\cU_0^{n-1}}H_\mu(\alpha)$

\end{enumerate}
It was shown there among other things that $h^-_\mu(\cU)\le h^+_\mu(\cU)$ and that in the
topological case (i.e a t.d.s and an open cover), one can
always find an invariant measure $\mu$ such that
$h^-_\mu(\cU)=h_{top}(\cU)$. This generalizes the result from [BGH] asserting that
in the topological case one can always find an invariant measure $\mu$ such that
$h_\mu^+(\cU)\ge h_{top}(\cU)$ \\

The question whether $h^-_\mu(\cU)= h^+_\mu(\cU)$ arose. In [HMRY] the
authors continued the research on these concepts and proved, among other
results, with aid of the Jewett-Krieger theorem, that if there exists a
t.d.s, an invariant measure $\mu$ and an open cover $\cU$ such that
$h^-_\mu(\cU)< h^+_\mu(\cU)$ then one can find such a situation in a
uniquely ergodic t.d.s.
\\
Recently, B.Weiss and E.Glasner [GW] showed that if $(X,T)$ is a t.d.s
and $\cU$ is any cover, then for any invariant measure $\mu$
$h^+_\mu(\cU)\le h_{top}(\cU)$
and so combining these results one concludes that for a t.d.s and an open cover we have that
$h^-_\mu(\cU)= h^+_\mu(\cU)$.
\\

The measure theoretical entropy of a partition $\alpha$ in an
ergodic m.t.d.s can be defined as:
$lim\frac{1}{n}log\cN(\alpha_0^{n-1},\epsilon)$, where
$0<\epsilon<1$ and
$\cN(\alpha_0^{n-1},\epsilon)$ is the minimum number of atoms of
$\alpha_0^{n-1}$ needed to cover $X$ up to a set of measure, less than
$\epsilon$. (See [Ru]).
\\
In this paper we follow this line and in section 4 define a notion
of measure theoretical entropy for a cover $\cU$ of an ergodic
m.t.d.s as $h_\mu^e(\cU) = lim \frac{1}{n}log\cN(\cU_0^{n-1} ,
\epsilon)$ (where $0<\epsilon<1$). We prove (Theorem 4.2) the
existence of the limit and its Independence of $\epsilon$, in a
different way from [Ru] using Strong Rohlin Towers. This can serve
as an alternative proof of the fact that the above definition of
measure theoretical entropy of a partition in an ergodic m.t.d.s is
well defined.
\\
We show in a direct way that in the ergodic case the three
notions: $h_\mu^-(\cU)$, $h_\mu^+(\cU), h_\mu^e(\cU)$, coincide
(Theorems 4.4, 4.5), and from the ergodic decomposition for
$h_\mu^-(\cU), h_\mu^+(\cU)$, proved in [HMRY], we deduce that
$h_\mu^-(\cU)= h_\mu^+(\cU)$ in the general case (Corollary 5.2),
and so, we can denote this number by $h_\mu(\cU,T)$ or
$h_\mu(\cU)$.
\\
We also get an immediate proof of a slight generalization of the
inequality $h_\mu(\cU)\le h_{top}(\cU)$, mentioned earlier, from
[GW], to the non topological case (Theorem 6.1).
\\

$\textbf{Acknowledgements}:$ This paper was written as an M.Sc thesis
at the Hebrew University of Jerusalem under the supervision of prof'
Benjamin Weiss. I would like to thank prof' Weiss, for introducing
me to the subject and for sharing with me his and Eli Glasner's
valuable ideas.

\section{Preliminaries}

Recall that in the following a measure theoretical dynamical system,
(m.t.d.s), is a four tuple
$(X,\cB,\mu,T)$,
where
$(X,\cB)$
is a standard space, $\mu$ is a probability measure on $(X,\cB)$ and $T$
is an invertible measure preserving transformation of $X$.
\\

\begin{definition}
\hspace{1cm}
\begin{itemize}
\item A cover of $X$ is a finite collection of measurable sets that cover
$X$.
\item The collection of covers of $X$ will be denoted by $\cC_X$
\item A partition of $X$ is a cover of $X$ whose elements are mutually
disjoint.
\item The collection of partitions of $X$ will be denoted by $\cP_X$.
\\
Usually we denote covers by $\cU, \cV$ and partitions by $\alpha, \beta,
\gamma$ etc.
\item We say that a cover $\cU$ is finer than $\cV$ ($\cU\succeq\cV$) if
any element of $\cU$ is contained in an element of $\cV$.
\item For any $\cU \in \cC_X$ and $k \in \fZ$ we denote by $T^k(\cU)$ the
cover whose elements are the sets of the form $T^k(U)$ where $U \in \cU$.
\item We define the join, $\cU \lor \cV$, of two covers $\cU, \cV$, to be
the cover whose elements are sets of the form $U \cap V$ where $U\in\cU$
and $V\in\cV$.
\item When the transformation $T$ is understood we denote, for $l>k$, the
cover $T^{-k}(\cU) \lor T^{-(k+1)}(\cU) \dots\lor T^{-l}(\cU)$, by
$\cU_k^l$.

\end{itemize}
\end{definition}

\begin{definition}
For $0<\delta<1$ define $H(\delta)=-\delta log\delta -
(1-\delta)log(1-\delta)$.
Note that $lim_{\delta \to 0}H(\delta)=0.$
\end{definition}

In the sequel, we will prove some combinatorial lemmas and often we
will encounter the expression $\sum_{j \le \delta K}\binom{K}{j}$. We
shall make use of the next elementary lemma:

\begin{lem}
(lemma 1.5.4 in [Sh1]): If $\delta < \frac{1}{2}$ then $\sum_{j \le \delta
K}\binom{K}{j} \le 2^{H(\delta)}$.
\end{lem}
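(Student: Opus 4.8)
The plan is to read the bound off the binomial expansion of $1=(\delta+(1-\delta))^K$. First I would write
\[
1=\sum_{j=0}^{K}\binom{K}{j}\delta^{j}(1-\delta)^{K-j}
\ge \sum_{j\le \delta K}\binom{K}{j}\delta^{j}(1-\delta)^{K-j},
\]
simply by discarding the nonnegative terms with $j>\delta K$. The reason for restricting the sum to $j\le\delta K$ is that the weight $\delta^{j}(1-\delta)^{K-j}=(1-\delta)^{K}\bigl(\tfrac{\delta}{1-\delta}\bigr)^{j}$ is a decreasing function of $j$: here is where the hypothesis $\delta<\tfrac12$ enters, since it forces $\tfrac{\delta}{1-\delta}<1$. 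Hence for every real $j\le\delta K$ we have $\bigl(\tfrac{\delta}{1-\delta}\bigr)^{j}\ge\bigl(\tfrac{\delta}{1-\delta}\bigr)^{\delta K}$, so each surviving weight is bounded below by the common value $\delta^{\delta K}(1-\delta)^{(1-\delta)K}$ (no integrality of $\delta K$ is needed for this step).

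The second step is to identify this common lower bound. Taking logarithms,
\[
\log\!\left(\delta^{\delta K}(1-\delta)^{(1-\delta)K}\right)
= K\bigl(\delta\log\delta+(1-\delta)\log(1-\delta)\bigr)
= -K\,H(\delta),
\]
by the definition of $H$. Substituting back into the displayed inequality gives $1\ge 2^{-K H(\delta)}\sum_{j\le\delta K}\binom{K}{j}$, which rearranges to $\sum_{j\le\delta K}\binom{K}{j}\le 2^{K H(\delta)}$, i.e.\ the asserted estimate (the exponent carries the factor $K$, with $\log$ taken to base $2$ throughout).

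I do not expect a real obstacle here; the argument is a two-line application of the binomial theorem plus monotonicity of the summand. The only points deserving a word of care are the use of $\delta<\tfrac12$ to guarantee that $\delta/(1-\delta)<1$ (so that the exponent $j$ may be pushed up to $\delta K$), and the observation that the monotonicity bound $(\delta/(1-\delta))^{j}\ge(\delta/(1-\delta))^{\delta K}$ is valid for arbitrary real $j\le\delta K$, so that one never needs $\delta K\in\fZ$.
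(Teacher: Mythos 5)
Your argument is correct and is the standard one (the paper itself gives no proof, merely citing Lemma 1.5.4 of [Sh1], where essentially this same binomial-theorem/monotonicity argument appears): discarding the terms with $j>\delta K$ in $1=(\delta+(1-\delta))^K$ and bounding each remaining weight below by $\delta^{\delta K}(1-\delta)^{(1-\delta)K}=2^{-KH(\delta)}$ is exactly right, and your use of $\delta<\tfrac12$ is where it must be. You are also right to read the bound as $2^{KH(\delta)}$; the factor $K$ is missing from the exponent in the statement as printed (a typo, as the later applications $\psi(K,\lambda_j,\epsilon_0)\le 2^{K\cdot H(\cdot)}$ confirm).
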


\begin{definition}
A m.t.d.s $(X,\cB,\mu,T)$ is said to be aperiodic, if for every $n \in
\fN$, $\mu(\{x|T^nx=x\})=0$.
\end{definition}

An ergodic system which is not aperiodic is easily seen to be a cyclic
permutation on a finite number of atoms.\\
One of our main tools in practice, will be the Strong Rohlin Lemma ([Sh2]
p.15):
\begin{lem}
Let $(X,\cB,\mu,T)$ be an ergodic, aperiodic system and let $\alpha \in
\cP_X$. Then for any $\delta > 0$ and $n \in \fN$, one can find a set $B
\in \cB$, such that $B,TB \dots ,T^{n-1}B$ are mutually disjoint,
$\mu(\bigcup_0^{n-1}T^iB) > 1- \delta$ and the distribution of $\alpha$ is
the same as the distribution of the partition $\alpha|_B$ that $\alpha$
induces on $B$.
\end{lem}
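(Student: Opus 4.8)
The plan is to derive this ``strong'' form of the Rohlin lemma from the ordinary one by a tower--sampling argument, the only real work being to make the induced distribution match \emph{exactly}. Since the system is aperiodic it is non-atomic (an atom would have infinitely many pairwise disjoint translates of equal positive mass), so the classical Rohlin lemma applies; and we may assume every atom of $\alpha=\{A_1,\dots,A_k\}$ has positive measure, the null atoms imposing no constraint. Fix $\delta>0$ and $n$, choose a small $\epsilon>0$ and a large $L$ (how small and large to be specified along the way), put $N=nL$, and apply the ordinary Rohlin lemma to obtain $F\in\cB$ with $F,TF,\dots,T^{N+n-1}F$ disjoint and $\mu(E)>1-\epsilon$, where $E=\bigcup_{j=0}^{N+n-1}T^jF$. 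The extra $n$ levels serve as slack, so that applying $T^a$ with $a<n$ to anything sitting in the bottom $N$ levels stays inside the disjoint part of the tower.

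Next I would refine $F$ into its \emph{columns}, the atoms of $\bigvee_{j=0}^{N-1}T^{-j}\alpha$ lying in $F$; each column $C$ is non-atomic and has a well-defined $\alpha$-name $w(C)=(w_0(C),\dots,w_{N-1}(C))$ with $T^jC\subseteq A_{w_j(C)}$. Split every column $C$ into $n$ pieces $C^0,\dots,C^{n-1}$ of equal measure $\mu(C)/n$ and set
\[
B_1=\bigcup_{C}\ \bigcup_{r=0}^{n-1}\ \bigcup_{t=0}^{L-1}\ T^{\,r+tn}\bigl(C^r\bigr).
\]
Using all $n$ phases $r$ is the key point: for each column the positions $r+tn$ sweep out every level $j\in\{0,\dots,N-1\}$ exactly once, and since the column decomposition refines $\alpha$ at each level this yields $\mu(B_1)=\tfrac1n\,\mu\bigl(\bigcup_{j<N}T^jF\bigr)$ and $\mu(A_i\cap B_1)=\tfrac1n\,\mu\bigl(A_i\cap\bigcup_{j<N}T^jF\bigr)$. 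As $\bigcup_{j<N}T^jF$ fills all but $O(\epsilon+n/N)$ of $X$, this forces $q_i:=\mu(A_i\cap B_1)/\mu(B_1)$ to obey $|q_i-\mu(A_i)|\le\kappa$ with $\kappa=O(\epsilon+n/N)$, which we can make as small as we please. A short bookkeeping on levels shows $B_1,TB_1,\dots,T^{n-1}B_1$ are pairwise disjoint --- any two of their constituent pieces lie in different sub-columns $C^r$, hence are disjoint, or in the same $C^r$ and then at distinct levels of the tower --- and that $\bigcup_{a<n}T^aB_1$ misses only the top $n$ and bottom $n-1$ levels of each column-stack together with $X\setminus E$, hence has measure $>1-\delta/2$ once $\epsilon<\delta/4$ and $N>8n/\delta$.

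Finally I would correct the distribution by \emph{deletion}, which is the device that upgrades ``approximate'' to ``exact''. Write $e_i=q_i-\mu(A_i)$, so $\sum_ie_i=0$ and $|e_i|\le\kappa$; put $M=\mu(B_1)\max_i|e_i|/\mu(A_i)$ and $m_i=e_i\mu(B_1)+\mu(A_i)M$. For $\kappa$ small relative to $\min_i\mu(A_i)$ one checks $0\le m_i\le\mu(A_i\cap B_1)$, so we may choose $D_i\subseteq A_i\cap B_1$ with $\mu(D_i)=m_i$ and set $B=B_1\setminus\bigcup_i D_i$. Because $B\subseteq B_1$, the levels $B,TB,\dots,T^{n-1}B$ remain disjoint; $\mu\bigl(\bigcup_{a<n}T^aB\bigr)\ge\mu\bigl(\bigcup_{a<n}T^aB_1\bigr)-nM>1-\delta$; and a one-line computation gives $\mu(A_i\cap B)=\mu(A_i\cap B_1)-m_i=\mu(A_i)\bigl(\mu(B_1)-M\bigr)=\mu(A_i)\mu(B)$, i.e.\ $\alpha|_B$ has exactly the distribution of $\alpha$. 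The main obstacle is precisely this passage from an approximate to an exact match of distributions under the tower constraints: a naive construction only pins down $q_i$ to within $\epsilon$, and the point is that removing sets from a Rohlin base automatically preserves disjointness while shrinking the covered set by a controlled amount, so the linear adjustment $m_i$ can be realized geometrically. Everything else --- the reduction to the ordinary Rohlin lemma, the disjointness bookkeeping, and the measure estimates --- is routine.
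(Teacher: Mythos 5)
Your proof is correct. Note first that the paper does not actually prove this lemma: it is quoted as a black box from [Sh2, p.~15], so there is no in-paper argument to compare against, and your write-up supplies a complete derivation from the ordinary Rohlin lemma. The two ideas it rests on both check out. (i) The phase-averaged base $B_1=\bigcup_C\bigcup_{r<n}\bigcup_{t<L}T^{r+tn}(C^r)$ samples every level $j\in[0,N)$ of every $\alpha$-column exactly once, with weight $\mu(C)/n$, so indeed $\mu(A_i\cap B_1)=\tfrac1n\mu\bigl(A_i\cap\bigcup_{j<N}T^jF\bigr)$ exactly and $|q_i-\mu(A_i)|=O(\epsilon+n/N)$; this is the step a naive choice of base (sampling only levels $\equiv 0 \bmod n$) would miss. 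The disjointness bookkeeping is right: pieces of $T^aB_1$ and $T^bB_1$ with $0\le a<b<n$ lie either over disjoint sub-columns, or over the same $C^r$ at levels differing by $b-a\not\equiv 0\ (\mathrm{mod}\ n)$, and all levels stay below $N+n$ thanks to the slack. (ii) The exact correction works: with $M=\mu(B_1)\max_i|e_i|/\mu(A_i)$ one has $0\le m_i\le\mu(A_i\cap B_1)$ once $\kappa\le\min_i\mu(A_i)$, $\sum_i m_i=M$ (using $\sum_ie_i=0$), and $\mu(A_i\cap B)=\mu(A_i)\mu(B)$ on the nose, at a coverage cost of only $nM=O(\kappa/\min_i\mu(A_i))$. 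The auxiliary facts you invoke --- non-atomicity from aperiodicity (needed both to split columns into $n$ equal pieces and to extract $D_i$ of prescribed measure) and the harmless reduction to $\mu(A_i)>0$ --- are correctly handled. This is the standard route to the strong Rohlin lemma (a much taller tower plus an exact adjustment of the base); your phase-averaging over residues mod $n$ is a clean substitute for arguments that instead control the empirical distribution of $\alpha$-names via the ergodic theorem.
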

The data $(n,\delta,B,\alpha)$ will be called, a strong Rohlin tower of
height $n$ and error $\delta$ with respect to $\alpha$ and with $B$ as a
base. \\

\section{Measure theoretical entropy of covers}
Let $(X,\cB,\mu,T)$ be a m.t.d.s. The definitions and proofs in this
section were introduced in [R].
\begin{definition}
for $\cU \in \cC_X$ we define the entropy of $\cU$ as:\\
$H_\mu(\cU)= inf_{\alpha \succeq \cU} H_\mu(\alpha)$.
\end{definition}

\begin{prop}
\hspace{1cm}
\begin{enumerate}
\item If $\cU , \cV \in \cC_X$ then $H_\mu(\cU \lor \cV) \le H_\mu(\cU)+
H_\mu(\cV).$
\item For every $\cU \in \cC_X$ $H_\mu(T^{-1}\cU)= H_\mu(\cU)$
\end{enumerate}
\end{prop}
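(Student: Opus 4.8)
The plan is to prove both statements by reducing them to the corresponding well-known facts for partitions, using the defining infimum $H_\mu(\cU)=\inf_{\alpha\succeq\cU}H_\mu(\alpha)$ and the standard properties of entropy of partitions (subadditivity under joins, invariance under the measure-preserving map $T$).

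For part (1), first I would observe that if $\alpha\succeq\cU$ and $\beta\succeq\cV$ are partitions refining the respective covers, then the partition $\alpha\lor\beta$ refines the cover $\cU\lor\cV$: any atom of $\alpha\lor\beta$ has the form $A\cap B$ with $A\in\alpha$, $B\in\beta$, and since $A$ is contained in some $U\in\cU$ and $B$ in some $V\in\cV$, the atom $A\cap B$ lies in $U\cap V\in\cU\lor\cV$. Hence $\alpha\lor\beta$ is admissible in the infimum defining $H_\mu(\cU\lor\cV)$, so $H_\mu(\cU\lor\cV)\le H_\mu(\alpha\lor\beta)\le H_\mu(\alpha)+H_\mu(\beta)$, where the last inequality is the classical subadditivity of partition entropy. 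Taking the infimum over $\alpha\succeq\cU$ and then over $\beta\succeq\cV$ on the right-hand side yields $H_\mu(\cU\lor\cV)\le H_\mu(\cU)+H_\mu(\cV)$.

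For part (2), the key point is that $\alpha\mapsto T^{-1}\alpha$ is a bijection between $\{\alpha\in\cP_X:\alpha\succeq\cU\}$ and $\{\beta\in\cP_X:\beta\succeq T^{-1}\cU\}$. Indeed $T^{-1}$ preserves the partition property (disjointness is preserved by taking preimages, and $\bigcup T^{-1}A_i=T^{-1}X=X$), it preserves the refinement relation (if $A\subseteq U$ then $T^{-1}A\subseteq T^{-1}U$, and conversely applying $T$), and it is invertible with inverse $\beta\mapsto T\beta$. Since $T$ is measure preserving, $H_\mu(T^{-1}\alpha)=H_\mu(\alpha)$ for every partition $\alpha$. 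Therefore
\[
H_\mu(T^{-1}\cU)=\inf_{\beta\succeq T^{-1}\cU}H_\mu(\beta)=\inf_{\alpha\succeq\cU}H_\mu(T^{-1}\alpha)=\inf_{\alpha\succeq\cU}H_\mu(\alpha)=H_\mu(\cU).
\]

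Neither step presents a serious obstacle; these are essentially bookkeeping arguments transferring properties of partition entropy through the infimum. The only point requiring a little care is making sure the set of partitions refining a cover is nonempty so the infimum is genuine — but any cover $\cU=\{U_1,\dots,U_k\}$ admits the refining partition generated by the sets $U_i$ (the atoms indexed by subsets $S\subseteq\{1,\dots,k\}$ consisting of points lying in exactly the $U_i$ with $i\in S$), so this is not an issue. I would also note in passing that part (2) is the special case $k=-1$ of the more general statement $H_\mu(T^k\cU)=H_\mu(\cU)$ for all $k\in\fZ$, proved by the identical argument using invertibility of $T$.
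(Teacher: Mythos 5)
Your proof is correct. The paper itself states this proposition without proof (deferring to [R]), and your argument — pushing both properties through the infimum over refining partitions, using subadditivity of partition entropy for (1) and the bijection $\alpha\mapsto T^{-1}\alpha$ together with invertibility and measure preservation for (2) — is exactly the standard argument intended there, including the correct observation that a cover always admits a refining partition so the infimum is over a nonempty set.
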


\begin{cor}
If $\cU \in \cC_X$ then the sequence $H_\mu(\cU_0^{n-1})$ is sub-additive.
\end{cor}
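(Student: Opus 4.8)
The plan is to show that the sequence $a_n := H_\mu(\cU_0^{n-1})$ satisfies $a_{n+m} \le a_n + a_m$ for all $n,m \ge 1$, which is exactly sub-additivity. The two ingredients are already in hand: part (1) of the Proposition (sub-additivity of $H_\mu$ with respect to the join) and part (2) of the Proposition ($T$-invariance of $H_\mu$ on covers).

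First I would record the combinatorial identity
\[
\cU_0^{n+m-1} \;=\; \cU_0^{n-1} \;\lor\; T^{-n}\!\left(\cU_0^{m-1}\right),
\]
which is immediate from the definition of $\cU_k^l$ as the join $T^{-k}\cU \lor \cdots \lor T^{-l}\cU$: the factors $T^{-0}\cU,\dots,T^{-(n-1)}\cU$ constitute $\cU_0^{n-1}$, and applying $T^{-n}$ to each factor of $\cU_0^{m-1} = T^{0}\cU \lor \cdots \lor T^{-(m-1)}\cU$ yields $T^{-n}\cU \lor \cdots \lor T^{-(n+m-1)}\cU$, which are the remaining factors. One should note here that the join is associative and commutative up to equality of the resulting collections of sets (or at worst up to the equivalence that leaves $H_\mu$ unchanged), so regrouping the $n+m$ factors in this way is legitimate.

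Next I would apply the two parts of the Proposition in sequence. By part (1),
\[
H_\mu(\cU_0^{n+m-1}) \;=\; H_\mu\!\left(\cU_0^{n-1} \lor T^{-n}\cU_0^{m-1}\right) \;\le\; H_\mu(\cU_0^{n-1}) + H_\mu\!\left(T^{-n}\cU_0^{m-1}\right).
\]
By part (2), applied $n$ times (or once in the form $H_\mu(T^{-1}\cV) = H_\mu(\cV)$ with $\cV$ ranging over $T^{-n+1}\cU_0^{m-1}, \dots, \cU_0^{m-1}$), we have $H_\mu(T^{-n}\cU_0^{m-1}) = H_\mu(\cU_0^{m-1}) = a_m$. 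Combining gives $a_{n+m} \le a_n + a_m$, as required.

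There is essentially no obstacle here — the corollary is a formal consequence of the Proposition together with the bookkeeping identity for $\cU_0^{n+m-1}$. The only point deserving a line of care is the verification that $T^{-n}\cU_0^{m-1}$ really equals $T^{-n}\cU \lor \cdots \lor T^{-(n+m-1)}\cU$, i.e.\ that $T^{-n}$ commutes with the join operation; this follows since $T^{-n}(U \cap V) = T^{-n}(U) \cap T^{-n}(V)$, so $T^{-n}$ distributes over intersections of the generating sets and hence over the join of covers.
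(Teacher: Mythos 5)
Your proof is correct and is precisely the standard argument the paper intends: decompose $\cU_0^{n+m-1}$ as $\cU_0^{n-1}\lor T^{-n}(\cU_0^{m-1})$ and apply parts (1) and (2) of Proposition 3.2. The paper states the corollary without proof because this is the immediate deduction; your write-up fills it in correctly, including the minor point that $T^{-n}$ commutes with the join.
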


\begin{cor}
If $\cU \in \cC_X$ then the sequence $\frac{1}{n}H_\mu(\cU_0^{n-1})$
converges to $inf_n\frac{1}{n}H_\mu(\cU_0^{n-1})$.
\end{cor}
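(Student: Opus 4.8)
The plan is to apply the classical subadditive (Fekete) argument to the sequence $a_n := H_\mu(\cU_0^{n-1})$. By the preceding corollary this sequence is subadditive, $a_{m+n}\le a_m+a_n$. Moreover, since $\cU$ is a finite cover the join $\cU_0^{n-1}$ is again a finite cover, hence it is refined by the finite partition into the (at most $2^{|\cU_0^{n-1}|}$) atoms generated by its members — each nonempty such atom lies inside some element of the cover because the complement of the union of the cover is empty. A finite partition has finite entropy, so $a_n<\infty$; and $a_n\ge 0$ since entropies of partitions are nonnegative. Consequently $L:=\inf_n \tfrac1n a_n$ is a well-defined real number with $0\le L<\infty$, and trivially $\liminf_n \tfrac1n a_n\ge L$. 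It remains to prove $\limsup_n \tfrac1n a_n\le L$.

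First I would fix $\epsilon>0$ and choose $m$ with $\tfrac1m a_m<L+\epsilon$. For arbitrary $n\ge m$ write $n=qm+r$ with $q\ge 1$ and $0\le r<m$ (division with remainder), and set $a_0:=0$. Iterating subadditivity gives $a_n\le q\,a_m + a_r$; dividing by $n$ and using $qm\le n$,
$$\frac{a_n}{n}\le \frac{q\,a_m}{qm}+\frac{a_r}{n}=\frac{a_m}{m}+\frac{a_r}{n}.$$
Since $0\le a_r\le C:=\max_{0\le r<m}a_r$ with $C$ independent of $n$, the term $a_r/n$ tends to $0$ as $n\to\infty$, so $\limsup_n \tfrac1n a_n\le \tfrac1m a_m<L+\epsilon$. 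As $\epsilon>0$ was arbitrary we obtain $\limsup_n \tfrac1n a_n\le L$, which together with the reverse inequality shows $\lim_n \tfrac1n a_n=L=\inf_n \tfrac1n a_n$.

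There is essentially no serious obstacle here; the only points requiring a little care are the preliminary remark that each $a_n$ is finite (so that the infimum is genuinely a real number and not $-\infty$), and the bookkeeping in the division-with-remainder estimate — in particular that the remainder contribution $a_r$ is uniformly bounded over the finitely many residues and is therefore swamped by the growing denominator. Both are routine.
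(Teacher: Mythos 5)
Your proof is correct and is exactly the standard Fekete subadditive-sequence argument that the paper implicitly invokes (it states this corollary without proof as an immediate consequence of the subadditivity of $H_\mu(\cU_0^{n-1})$). Your preliminary observations that each term is finite (a finite cover is refined by a finite partition, which has finite entropy) and nonnegative are the right points to check, and the division-with-remainder estimate is carried out correctly.
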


Two ways of generalizing the definition of measure theoretical entropy
of a partition to a cover are:
\begin{definition}
If $\cU \in \cC_X$, define
\begin{enumerate}
\item $h_\mu^-(\cU,T)=lim\frac{1}{n}H_\mu(\cU_0^{n-1})$.
\item $h_\mu^+(\cU,T)= \inf_{\alpha \succeq \cU}h_\mu(\alpha,T)$.
\end{enumerate}
When $T$ is understood we usually omit it and write $h_\mu^-(\cU)$,
$h_\mu^+(\cU)$.
\end{definition}
We shall see later that in fact $h_\mu^-(\cU)=h_\mu^+(\cU)$.
\begin{prop}
\hspace{1cm}
\begin{enumerate}
\item $h_\mu^-(\cU) \le h_\mu^+(\cU)$.
\item for any $m \in \fN \quad h_\mu^-(\cU,T)=\frac{1}{m}
h_\mu^-(\cU_0^{m-1},T^m)$
\item $h_\mu^-(\cU,T)=lim_n\frac{1}{n} h_\mu^+(\cU_0^{n-1},T^n)$
\end{enumerate}
\end{prop}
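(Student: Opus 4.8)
The three statements have quite different flavors, so I would treat them separately. For (1), $h_\mu^-(\cU)\le h_\mu^+(\cU)$, the plan is to fix any partition $\alpha\succeq\cU$ and observe that $\alpha_0^{n-1}\succeq\cU_0^{n-1}$, hence by the definition of $H_\mu$ on covers (as an infimum over refining partitions) we get $H_\mu(\cU_0^{n-1})\le H_\mu(\alpha_0^{n-1})$. Dividing by $n$, letting $n\to\infty$, and using Corollary~\ref{} (the convergence of $\frac1n H_\mu(\cU_0^{n-1})$) on the left and the standard Kolmogorov--Sinai limit on the right gives $h_\mu^-(\cU)\le h_\mu(\alpha,T)$; taking the infimum over $\alpha\succeq\cU$ yields the claim. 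The only point requiring a word is that $\alpha_0^{n-1}$ is genuinely a partition refining the cover $\cU_0^{n-1}$, which is immediate from the definitions of join and of ``finer.''

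For (2), $h_\mu^-(\cU,T)=\frac1m h_\mu^-(\cU_0^{m-1},T^m)$, I would compare the $n$-fold $T^m$-join of $\cU_0^{m-1}$ with a $T$-join of $\cU$. Writing $\cV=\cU_0^{m-1}$, one checks that $\bigvee_{j=0}^{n-1}T^{-jm}\cV = \cU_0^{nm-1}$ as covers (each side is the join of $T^{-i}\cU$ for $0\le i\le nm-1$, possibly with repeated sets, which does not change $H_\mu$). Therefore $H_\mu^{(T)}(\cU_0^{nm-1}) = H_\mu^{(T^m)}\big(\bigvee_{j=0}^{n-1}T^{-jm}\cV\big)$, and here the entropy $H_\mu$ of a cover does not depend on the transformation at all, only on $\mu$ and the cover. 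Dividing by $nm$ and taking $n\to\infty$: the left side converges to $h_\mu^-(\cU,T)$ (subsequence $nm$ of a convergent sequence), and the right side is $\frac1m\cdot\frac1n H_\mu^{(T^m)}\big((\cV)_0^{n-1}\big)\to \frac1m h_\mu^-(\cV,T^m)$, which is exactly what we want.

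For (3), $h_\mu^-(\cU,T)=\lim_n \frac1n h_\mu^+(\cU_0^{n-1},T^n)$, the natural strategy is a sandwich. The lower bound $h_\mu^+(\cU_0^{n-1},T^n)\ge h_\mu^-(\cU_0^{n-1},T^n)=n\,h_\mu^-(\cU,T)$ follows from (1) and (2), giving $\frac1n h_\mu^+(\cU_0^{n-1},T^n)\ge h_\mu^-(\cU,T)$ for every $n$. For the upper bound I would, for fixed $n$, pick a partition $\alpha\succeq\cU_0^{n-1}$ with $H_\mu(\alpha)$ close to $H_\mu(\cU_0^{n-1})$; then $h_\mu^+(\cU_0^{n-1},T^n)\le h_\mu(\alpha,T^n)\le H_\mu(\alpha)$ (since entropy is bounded by the static entropy of a generator... more safely, $h_\mu(\alpha,T^n)\le H_\mu(\alpha)$ always). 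Thus $\frac1n h_\mu^+(\cU_0^{n-1},T^n)\le \frac1n H_\mu(\alpha)$, and choosing $\alpha$ appropriately this is $\le \frac1n H_\mu(\cU_0^{n-1})+\varepsilon\to h_\mu^-(\cU,T)$.

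The main obstacle I anticipate is in part (3): the crude bound $h_\mu(\alpha,T^n)\le H_\mu(\alpha)$ may be too lossy, since $H_\mu(\cU_0^{n-1})$ can grow linearly in $n$, so $\frac1n H_\mu(\cU_0^{n-1})$ tends to $h_\mu^-(\cU,T)$ but the approximating $\alpha$ for each $n$ must be handled uniformly. The fix is to not bound $h_\mu(\alpha,T^n)$ by $H_\mu(\alpha)$ directly but rather to use that $\frac1n H_\mu(\cU_0^{n-1})\downarrow h_\mu^-(\cU,T)$ (Corollary, infimum form), so that for the $\varepsilon$-good $\alpha$ at level $n$ we have $\frac1n H_\mu(\alpha)\le \frac1n H_\mu(\cU_0^{n-1})+\frac\varepsilon n$, which already converges to $h_\mu^-(\cU,T)$; combined with the lower bound this pins the limit. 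Care is also needed that all limits in $n$ genuinely exist — for the $\limsup$ we use the upper estimate, for the $\liminf$ the lower estimate, so the limit exists and equals $h_\mu^-(\cU,T)$.
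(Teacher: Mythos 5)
Your proof is correct. The paper states this proposition without proof (deferring to [R]), and your argument is the standard one: (1) via $\alpha_0^{n-1}\succeq\cU_0^{n-1}$ and the infimum definition of $H_\mu$ on covers; (2) via the identity $\bigvee_{j=0}^{n-1}T^{-jm}\big(\cU_0^{m-1}\big)=\cU_0^{nm-1}$ together with the fact that $\frac{1}{k}H_\mu(\cU_0^{k-1})$ converges along the full sequence (so the subsequence $k=nm$ has the same limit); and (3) by sandwiching $\frac{1}{n}h_\mu^+(\cU_0^{n-1},T^n)$ between $h_\mu^-(\cU,T)$ (from (1) and (2)) and $\frac{1}{n}H_\mu(\cU_0^{n-1})+\frac{\varepsilon}{n}$ (from $h_\mu(\alpha,T^n)\le H_\mu(\alpha)$ for a near-optimal $\alpha\succeq\cU_0^{n-1}$). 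The only cosmetic slip is the assertion that $\frac{1}{n}H_\mu(\cU_0^{n-1})$ decreases monotonically to $h_\mu^-(\cU,T)$: Corollary 3.4 only gives convergence to the infimum of the sequence, which is all your squeeze argument actually uses.
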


\section{The ergodic case}

Throughout this section, $(X,\cB,\mu,T)$, is an ergodic m.t.d.s.\\
For $\cU \in \cC_X$, we denote by $\cN(\cU,\epsilon,\mu)$, the
minimum number of elements of $\cU$, needed to cover all of $X$, up to a
set of measure, less than $\epsilon$. When $\mu$ is understood we write
$\cN(\cU,\epsilon)$.

By a strait forward calculation one deduces from [Sh1] p.51 the
following:
\begin{thm}
If $(X,\cB,\mu,T)$ is an ergodic m.t.d.s and $\alpha \in \cP_X$, then for
any $0< \epsilon <1$,
$h_\mu(\alpha,T)=lim\frac{1}{n}log
\cN(\alpha_0^{n-1},\epsilon)$.
\end{thm}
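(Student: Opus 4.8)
The plan is to establish the equality $h_\mu(\alpha,T) = \lim_n \tfrac{1}{n}\log\cN(\alpha_0^{n-1},\epsilon)$ by sandwiching the right-hand side between quantities that both converge to $h_\mu(\alpha,T)$. Fix $0<\epsilon<1$. For the lower bound one direction is essentially the Shannon--McMillan--Breiman theorem: for large $n$, most of the measure concentrates on roughly $2^{n h_\mu(\alpha,T)}$ atoms of $\alpha_0^{n-1}$, each of measure about $2^{-n h_\mu(\alpha,T)}$, so to cover a set of measure $1-\epsilon$ one genuinely needs at least about $(1-\epsilon)\,2^{n(h_\mu(\alpha,T)-o(1))}$ atoms; this gives $\liminf_n \tfrac1n\log\cN(\alpha_0^{n-1},\epsilon)\ge h_\mu(\alpha,T)$. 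However, since the stated intention is to give a proof via Strong Rohlin Towers that does \emph{not} rely on the partition machinery of [Ru], I would instead produce both inequalities from the tower construction directly, using $H(\delta)\to 0$ as $\delta\to 0$ (Definition~3.2 / the remark after it) and the binomial estimate of Lemma~2.3 to control the combinatorial cost of the ``bad'' part of a tower.

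The core of the argument is the tower construction. Given $\delta>0$ and a target height, apply the Strong Rohlin Lemma (Lemma~2.6) to get a base $B$ with $B,TB,\dots,T^{N-1}B$ disjoint, $\mu(\bigcup_{i=0}^{N-1}T^iB)>1-\delta$, and the induced partition $\alpha|_B$ distributed as $\alpha$. The key point is that an atom of $\alpha_0^{n-1}$ is, up to the negligible complement of the tower and the top $N$ of the levels, determined by (i) which level of the tower the base of the $\alpha$-$n$-name sits at — at most $N$ choices, contributing $O(\log N)$ — and (ii) the $\alpha|_B$-name of the base point along the tower, i.e. a point in an essentially i.i.d.-like symbolic system whose per-coordinate entropy is $H_\mu(\alpha)$; the statistics of this symbolic system are governed by the $T$-dynamics on $B$ (with the induced map). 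Counting atoms needed to cover $1-\epsilon$ of the space thus reduces to counting typical $n$-blocks in this induced symbolic picture, which by elementary counting (Lemma~2.3 to absorb the atypical blocks, whose frequency is small) is $2^{n(h_\mu(\alpha,T)+o(1))}$ from above and $2^{n(h_\mu(\alpha,T)-o(1))}$ from below as $n\to\infty$, $\delta\to 0$. Taking $\limsup$ and $\liminf$ and then letting the tower error $\delta\to 0$ pins the limit (which exists since $\tfrac1n H_\mu(\alpha_0^{n-1})$ converges by Corollary~3.4, and $\log\cN$ is squeezed by it) to $h_\mu(\alpha,T)$, independently of $\epsilon$.

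The main obstacle I expect is making precise the claim that ``an atom of $\alpha_0^{n-1}$ restricted to the tower is determined by its level plus its $\alpha|_B$-name'' and that the relevant block statistics really have entropy rate $H_\mu(\alpha)$: one must track how the return-time structure of the induced system interacts with reading off an $\alpha_0^{n-1}$-name, handle the boundary effects near the top of the tower (the last $N$ levels, whose total measure is $<\delta$ but whose names are not controlled), and verify that the atypical-frequency set can be absorbed into the measure-$\epsilon$ slack precisely because its measure tends to $0$ by the ergodic theorem. Dealing cleanly with the asymmetry — the upper bound on $\cN$ needs only that typical names are few, while the lower bound needs that no small collection of atoms can carry mass $1-\epsilon$, which in turn needs a lower bound on the number of ``substantial'' atoms — is the delicate part; here one invokes that in the induced symbolic system the measure of any union of at most $2^{n(h-\eta)}$ blocks tends to $0$, a fact that itself comes from the tower picture together with subadditivity (Corollary~3.3) rather than from SMB.
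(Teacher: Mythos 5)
Your first paragraph is, in effect, the paper's entire proof: Theorem 4.1 is deduced as a ``straightforward calculation'' from [Sh1] p.~51, i.e.\ precisely the Shannon--McMillan--Breiman sandwich you describe (for the upper bound, the atoms of measure at least $2^{-n(h+\eta)}$ eventually carry measure $>1-\epsilon$ and number at most $2^{n(h+\eta)}$; for the lower bound, any $K$ atoms meet the set where atoms have measure at most $2^{-n(h-\eta)}$ in measure at most $K2^{-n(h-\eta)}$). Had you stopped there, you would have reproduced the intended argument. The Rohlin-tower machinery you then commit to is not how the paper proves this statement; it is reserved for Theorem 4.2 (existence and $\epsilon$-independence of the limit for covers) and Theorems 4.4--4.5, and in fact the paper's proof of Theorem 4.4 \emph{uses} the present statement as an input, so one must be careful not to set up a circle.

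More importantly, the tower argument as you sketch it has a genuine gap. The Strong Rohlin Lemma taken with respect to $\alpha$ only guarantees that the single partition $\alpha|_B$ has the same distribution as $\alpha$; it gives no control over the joint distribution of $\alpha,T^{-1}\alpha,\dots,T^{-(N-1)}\alpha$ on $B$. Consequently ``the level of the tower plus the $\alpha|_B$-name of the base point'' does \emph{not} determine an atom of $\alpha_0^{n-1}$, and the induced symbolic system is in no sense i.i.d.-like with per-coordinate entropy $H_\mu(\alpha)$: that is the one-letter entropy, which strictly exceeds $h_\mu(\alpha,T)$ unless the $\alpha$-process is independent, so your count would come out as $2^{nH_\mu(\alpha)}$ rather than $2^{nh_\mu(\alpha,T)}$. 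To make the column structure carry $n$-block information one must build the tower over $\alpha_0^{N-1}$ (this is exactly what the paper's ``good bases'' accomplish), and even then identifying the exponent as $h_\mu(\alpha,T)$ requires either SMB or the empirical-distribution count of Lemma 4.6 together with $\frac{1}{N}H_\mu(\alpha_0^{N-1})\to h_\mu(\alpha,T)$; Lemma 2.3 only controls the placement of the uncovered gaps, not the number of typical blocks. Likewise, the fact you invoke for the lower bound --- that any union of $2^{n(h-\eta)}$ atoms has small measure --- is itself an SMB-strength statement and does not follow from subadditivity alone; the SMB-free substitute is the elementary estimate $nh_\mu(\alpha,T)\le H_\mu(\alpha_0^{n-1})\le H(\epsilon)+\log\cN(\alpha_0^{n-1},\epsilon)+\epsilon n\log M$ combined with the $\epsilon$-independence furnished by Theorem 4.2, which is a different mechanism from the one you describe.
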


In view of this result, a natural way to generalize the definition
of measure theoretical entropy of a partition to covers will be the
following:
\[h_\mu(\cU,T)=lim\frac{1}{n}log\cN(\cU_0^{n-1},\epsilon).\]
Where $0< \epsilon <1$. In order to do so we have to show that the above
limit exists and is independent of $\epsilon$.

\begin{thm}
For any $0< \epsilon <1$, the sequence
$\frac{1}{n}log\cN(\cU_0^{n-1},\epsilon)$
converges and the limit is independent of $\epsilon$.
\end{thm}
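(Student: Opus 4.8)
The plan is to establish two things: (i) convergence of $\frac1n\log\cN(\cU_0^{n-1},\epsilon)$ for each fixed $\epsilon$, and (ii) that the limiting value does not depend on $\epsilon\in(0,1)$. For (i), the natural strategy is to show that the sequence $a_n=\log\cN(\cU_0^{n-1},\epsilon)$ is subadditive up to a controllable error, so that $\frac1n a_n$ converges. The obvious inequality is that a near-cover of $X$ by $N_1$ atoms of $\cU_0^{m-1}$ together with a near-cover by $N_2$ atoms of $T^{-m}\cU_0^{k-1}$ yields a near-cover of $X$ by at most $N_1 N_2$ atoms of $\cU_0^{m+k-1}$ — but the errors add, so one only gets $\cN(\cU_0^{m+k-1},\epsilon_1+\epsilon_2)\le \cN(\cU_0^{m-1},\epsilon_1)\cdot\cN(\cU_0^{k-1},\epsilon_2)$. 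This is not quite subadditivity at a fixed $\epsilon$, which is exactly why (ii) has to be proved essentially simultaneously with (i): if we knew the limit (when it exists) were $\epsilon$-independent, a diagonal/Fekete argument would close the gap. So the real content is a robustness estimate showing that shrinking or enlarging $\epsilon$ changes $\frac1n\log\cN$ by $o(1)$.

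The key step — and the place where the Strong Rohlin Lemma enters, as the introduction advertises — is the following comparison. Fix $0<\epsilon'<\epsilon<1$. I want to bound $\cN(\cU_0^{n-1},\epsilon')$ in terms of $\cN(\cU_0^{m-1},\epsilon)$ for $m$ much smaller than $n$. Build a strong Rohlin tower of height $m$ and small error $\delta$ with respect to a partition refining the relevant cover (and fine enough to see which atoms of $\cU_0^{m-1}$ a point lies in). On the base $B$, and iterating the optimal $\epsilon$-near-cover of $X$ by atoms of $\cU_0^{m-1}$ across the $\lfloor n/m\rfloor$ disjoint floors $B, TB,\dots$, one covers all of the tower except for: the uncovered $\epsilon$-portion in each floor (which, using that the induced distribution on $B$ matches that on $X$ and that $T$ is measure preserving, is handled floor-by-floor), the top $O(m)$ floors that spill past level $n$, and the complement of the tower of measure $<\delta$. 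A standard counting argument — of the same flavor as Lemma 2.3 with $H(\cdot)$ controlling the binomial sums — bounds the number of atoms of $\cU_0^{n-1}$ needed: roughly $\cN(\cU_0^{m-1},\epsilon)^{n/m}$ times a subexponential correction coming from the choice of which floors are ``bad''. Taking logs, dividing by $n$, and letting first $n\to\infty$, then $m\to\infty$, then $\delta\to0$ gives $\limsup_n \frac1n\log\cN(\cU_0^{n-1},\epsilon')\le \liminf_m\frac1m\log\cN(\cU_0^{m-1},\epsilon)$.

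Applying this inequality once with $\epsilon'<\epsilon$ and once with the roles reversed (which is legitimate since $\epsilon,\epsilon'$ were arbitrary in $(0,1)$) yields that $\limsup$ and $\liminf$ coincide and are equal for all values of the parameter; this simultaneously proves that the limit exists and that it is independent of $\epsilon$. The main obstacle I anticipate is the bookkeeping in the counting estimate: one must check that the ``bad floor'' corrections and the tower-error corrections really are subexponential in $n$ (this is where $H(\delta)\to0$ as $\delta\to0$ from Definition 2.2, together with Lemma 2.3, is used), and that the distribution-matching clause of the Strong Rohlin Lemma is genuinely what lets the per-floor error be controlled rather than accumulating. The aperiodic hypothesis is needed for the Rohlin Lemma; the degenerate (finite cyclic) case where it fails must be disposed of separately, but there $\cU_0^{n-1}$ stabilizes and both sides are zero, so there is nothing to prove.
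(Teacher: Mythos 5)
Your overall logical frame is right (prove $\limsup_n \frac1n\log\cN(\cU_0^{n-1},\epsilon')\le\liminf_m\frac1m\log\cN(\cU_0^{m-1},\epsilon)$ for $\epsilon'<\epsilon$ and combine with the trivial monotonicity), and the reduction of the periodic case is fine. But there is a genuine gap at the heart of the argument, hidden in the phrase ``the uncovered $\epsilon$-portion in each floor \dots is handled floor-by-floor.'' It is not. With a single Rohlin tower of height $m$ built from an $\epsilon$-near-cover of $X$ by $N=\cN(\cU_0^{m-1},\epsilon)$ atoms, the good part $B$ of the base $\tilde B$ has only $\mu(B)\ge(1-\epsilon)\mu(\tilde B)$, so for a \emph{typical} point $x$ about an $\epsilon$-fraction of its $\approx n/m$ return times to $\tilde B$ land in $\tilde B\setminus B$. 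Hence an $\approx\epsilon$-fraction of the coordinates of $[0,n-1]$ lies in blocks whose $\cU_0^{m-1}$-name is uncontrolled, and you must pay a factor $\approx M^{\epsilon n}$ to fill them in. Since $\epsilon$ is \emph{fixed} (it is the larger error $\rho_1$ in the comparison, and you are not allowed to send it to $0$), this contributes an irreducible $\epsilon\log M$ to the rate, and you only obtain $\limsup\le\liminf+\epsilon\log M+\cdots$, not the clean inequality you need. (A single tower does suffice in the paper's Theorem 4.4, precisely because there the auxiliary error is eventually sent to $0$; here it cannot be.)

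The missing idea is the multi-scale construction: the paper builds $\ell$ strong Rohlin towers at widely separated heights $N_1\ll N_2\ll\cdots\ll N_\ell$, each from a $\rho_1$-good base, so that each family of return-blocks covers a fraction $\lambda_j>\frac{1-\rho_1}{2}$ of the time axis, and then uses a dedicated combinatorial lemma (Lemma 4.3) to extract from these $\ell$ families a single \emph{separated} family of blocks covering a $1-\nu_1$ fraction of $[0,K-1]$ with $\nu_1\le\big(\frac{1+\rho_1}{2}\big)^\ell$. The uncontrolled fraction is then $\nu_1+\epsilon_0$, which is made arbitrarily small by taking $\ell$ large, and the binomial-counting correction $2^{KH(\nu_1+\epsilon_0)}$ (your Lemma 2.3 step) takes care of the choice of block positions. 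Your proposal as written would need this extra layer — one tower and one application of $H(\delta)\to0$ will not close the argument.
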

In order to prove this theorem we shall need a combinatorial lemma.
Let us first introduce some terminology (in first reading the reader
may skip the following discussion and turn to the discussion held
after the proof of Lemma 4.3):
\begin{itemize}
\item We say that two intervals in $\fN$, $I,J$ are separated if there is
$n \in \fN$ such that for any $i \in I, j \in J$ we have
$i<n<j$ or $j<n<i$.
\item We say that a collection $\{I_i\}_{i \in A}$ of intervals in
$\fN$ is a separated collection if any two of its elements are separated.
\item We say that a collection $\{I_i\}_{i \in A}$ of subintervals of an interval
$[1,K]$ is a $(\lambda,\epsilon)$ separated  cover of $[1,K]$ (for $0<
\lambda <1$, $0<\epsilon$), if it is separated and
\[|\frac{|\cup{I_i}|}{K}-\lambda|< \epsilon .\]
\item Given a vector $\vec{\lambda}=(\lambda_1\dots\lambda_l)$, we
denote $$\nu_r(\vec{\lambda})=\prod_{j=r}^l(1-\lambda_j)$$ or just
$\nu_r$ when $\vec{\lambda}$ is understood. For $r>l$ we set
$\nu_r=1$. Note that for $j<l$ we have:
$$\sum_{r=j+1}^l\lambda_r\nu_{r+1}=1-\nu_j.$$
\end{itemize}

In the following combinatorial lemma, we will be given $l$ separated
collections $\{I_i^j\}_{i \in A_j}$, $ j=1 \dots l$ of subintervals
of a very long interval $[1,K]$. The knowledge about these
collections is that the members of the $j$'th collection all have
the same length, $N_j$, $N_1<<N_2 \dots <<N_l$ and every collection
is very "equally distributed"
in $[1,K]$ in some sense.\\
We would like to extract, from these collections, a separated collection
that will cover as much as we can, from $[1,K]$.\\
Let us denote by $\lambda_j$, the percentage of $[1,K]$, that is
covered by the $j$'th collection and by $\vec{\lambda}$, the
corresponding vector. Then, $\lambda_l=1-\nu_l$ percent of $[1,K]$
is covered by $\{I_i^l\}$. The complement is of size $K\nu_l$ and we
could cover $\lambda_{l-1}$ percent of it with the
$\{I_i^{l-1}\}$'s. By now we covered $K(1-\nu_{l-1})$ and we could
cover $\lambda_{l-2}$ percent of the complement by the
$\{I_i^{l-2}\}$'s. So by now we covered $K(1-\nu_{l-2})$ of $[1,K]$.
We go on this way and extract a separated collection that covers
$1-\nu_1$ percent of $[1,K]$. Let us now make these ideas precise.

\begin{lem}
For any $l>0$, there exists a positive function
$\varphi= \varphi(N_1 \dots N_l,\eta_1 \dots \eta_l,\epsilon)$
(where $N_1<N_2 \dots <N_l \in \fN$, $\eta_i, \epsilon>0$)
such that
\[\limsup_{\epsilon \rightarrow 0}
\limsup_{N_1 \rightarrow \infty}
\limsup_{\eta_1 \rightarrow 0}
\dots
\limsup_{N_l \rightarrow \infty}
\limsup_{\eta_l \rightarrow 0}
\varphi(N_i,\eta_i,\epsilon)=0. \qquad (*)\]
and such that if $0< \lambda_j <1$ $j=1 \dots l$ and
$\{I_i^j\}_{i \in A_j}$ are separated collections of subintervals of
$[1,K]$ that satisfy:
\begin{alist}
\item For every $1 \le j \le l$ $|I_i^j|=N_j$.
\item For every $1 \le j \le l$ $\{I_i^j\}$ is a
$(\lambda_j,\epsilon)$-separated cover of $[1,K]$.
\item For every $0 \le j < r \le l$, the number of subintervals, $J$, of
$[1,K]$, of length $N_r$, which are not $(\lambda_j,\epsilon)$-separately
covered by $\{I_i^j \subset J\}$ is less than $\eta_rK$.
\end{alist}
then there are sets $\tilde{A_j} \subset A_j$ $j=1 \dots l$, such
that $\{\{I_i^j\}_{i \in \tilde{A_j}}\}_{j=1}^l$ is a separated
collection and $[1,K]$ is
$((1-\nu_1(\vec{\lambda})),\varphi(N_i,\eta_i,\epsilon))$-separately
covered by $\{\{I_i^j\}_{i \in \tilde{A_j}}\}_{j=1}^l.$
\end{lem}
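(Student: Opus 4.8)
The plan is to make rigorous the greedy ``top level first, then fill the gaps'' procedure described just above the statement, and to check that the resulting error terms vanish in exactly the nested order in which the limits are taken in $(*)$. Set $G_{l+1}=[1,K]$. Having the region $G_{r+1}$, let $\tilde{A_r}\subseteq A_r$ consist of those $i$ with $I_i^r\subseteq G_{r+1}$ and such that, in addition, both neighbouring integers $\min I_i^r-1$ and $\max I_i^r+1$ either lie in $G_{r+1}$ or fall outside $[1,K]$; then put $U_r=\bigcup_{i\in\tilde{A_r}}I_i^r$ and $G_r=G_{r+1}\setminus U_r$. The ``buffer'' built into this definition is what guarantees that the resulting family $\{\{I_i^r\}_{i\in\tilde{A_r}}\}_{r=1}^l$ is a separated collection: two chosen intervals of the same level are separated because the collection $\{I_i^r\}$ is; and a chosen interval $I'$ of a level $r'<r$ lies, together with its two neighbours, inside $G_{r'+1}=G_{r+1}\setminus(U_r\cup\cdots\cup U_{r'+1})$, while a chosen interval $I$ of level $r$ lies inside $U_r$, so $I$ and $I'$ are disjoint and, by the buffer, not adjacent, hence separated.

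The heart of the proof is the estimate that $|G_r|/K$ is within $\varphi_r$ of $\nu_r$, proved by downward induction on $r$ from the trivial case $r=l+1$, where $|G_{l+1}|/K=1=\nu_{l+1}$. For the inductive step it is enough to show that the chosen level-$r$ intervals cover about $\lambda_r|G_{r+1}|$. These are all of the level-$r$ intervals lying inside $G_{r+1}$ except the ones straddling $\partial G_{r+1}$ or killed by the buffer condition, and the total length of the exceptions is at most a constant times $N_r\sum_{s>r}|\tilde{A_s}|\le N_r\sum_{s>r}K/N_s$ --- here one uses, crucially and repeatedly, that a separated collection meets any given point at most once. So the claim reduces to $|(\bigcup_{i\in A_r}I_i^r)\cap G_{r+1}|\approx\lambda_r|G_{r+1}|$.

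Write $[1,K]\setminus G_{r+1}$ as the disjoint union $U_l\sqcup U_{l-1}\sqcup\cdots\sqcup U_{r+1}$. By hypothesis (b) for level $r$, the last approximation follows once we know $|(\bigcup_{i\in A_r}I_i^r)\cap U_s|\approx\lambda_r|U_s|$ for every $s>r$. Each $U_s$ is a disjoint union of at most $K/N_s$ subintervals of $[1,K]$ of length $N_s$; by hypothesis (c), with $s$ in the role of ``$r$'' and our $r$ in the role of ``$j$'', all but at most $\eta_sK$ of the length-$N_s$ subintervals of $[1,K]$, hence all but at most $\eta_sK$ of the pieces of $U_s$, are $(\lambda_r,\epsilon)$-separately covered from within by level-$r$ intervals; on such a piece the level-$r$ coverage is $\lambda_r N_s$ up to an additive $\epsilon N_s+2N_r$, while on each of the $\le\eta_sK$ bad pieces it is at most $N_s$. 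Summing over the pieces, adding the buffer and straddle losses from the previous paragraph, and dividing by $K$ produces an error that is a finite sum --- with constants depending only on $l$ --- of terms of the forms $\epsilon$, $\eta_sN_s$ with $s>r$, and $N_r/N_s$ with $s>r$; a short further induction gives $\varphi_r$ of the same shape, and the case $r=1$ is the assertion, with $\varphi=\varphi_1$. (The arithmetic closes up to exactly $1-\nu_1$ because of the telescoping identity $\sum_r\lambda_r\nu_{r+1}=1-\nu_1$ noted before.)

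It remains to verify $(*)$, and this is where the order of the limsups does real work: $\epsilon$ is killed at the outermost $\limsup_{\epsilon\to0}$; every term $\eta_sN_s$ is killed at $\limsup_{\eta_s\to0}$, which sits immediately inside $\limsup_{N_s\to\infty}$, so $N_s$ is still frozen when $\eta_s\to0$; and every term $N_r/N_s$ with $r<s$ is killed at $\limsup_{N_s\to\infty}$, taken while $N_r$ is still frozen. I expect the genuinely delicate part of writing this up to be the separatedness and loss-control bookkeeping --- keeping all $l$ levels consistent at once, and exploiting the ``at most one interval per point'' property at every stage to keep the losses of order $N_r\sum_{s>r}K/N_s$ rather than of order $K$ --- rather than any individual inequality; it is also worth noting that hypothesis (c) is used only as an absolute bound on the number of bad windows, so one never needs a lower bound on the number of chosen intervals at any level.
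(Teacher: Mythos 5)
Your proposal is correct and follows essentially the same route as the paper: the same top-down greedy selection (at level $r$ keep exactly those intervals separated from all higher-level choices), the same use of hypothesis (c) via good/bad length-$N_r$ windows, and error terms of the same shape $\epsilon$, $\eta_sN_s$, $N_r/N_s$ killed in the same nested order. The only difference is cosmetic bookkeeping --- you track the measure of the residual region $G_r$ by downward induction, while the paper bounds the cardinalities $|\tilde{A_j}|$ from above and below --- and since all level-$j$ intervals have length $N_j$ these are equivalent.
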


\begin{proof}
We will build the $\tilde{A_j}$'s by recursion, starting with $j=l$.
Define $\tilde{A_l}=A_l$. Then from (b) we have that
$|\frac{N_l|\tilde{A_l}|}{K}- \lambda_l|<\epsilon$. So if we will
define $f_l(N_i,\eta_i,\epsilon)=\epsilon$, then $f_l$ satisfies
$(*)$ and $[1,K]$ is
$(\lambda_l\nu_{l+1},f_l(N_i,\eta_i,\epsilon))$-separately covered
by $\{I_i^l\}_{i \in \tilde{A_l}}$. Now, suppose we have defined
$\tilde{A_l} \dots \tilde{A}_{j+1}$ and positive functions $f_l
\dots f_{j+1}$, that satisfy $(*)$, such that $\{\{I_i^r\}_{i \in
\tilde{A_r}}\}_{r=j+1}^l$, is a separated collection and for every
$j+1 \le r \le l$, $[1,K]$ is
$(\lambda_r\nu_{r+1},f_r(N_i,\eta_i,\epsilon))$-separately covered
by $\{I_i^r\}_{i \in \tilde{A_r}}$. Define now,
\[\tilde{A_j}=\{i \in A_j |\;I_i^j\; is\; separated\; from\; \{I_s^r\}_{s
\in \tilde{A_r}}, r=j+1 \dots l\}.\]
We want to estimate the size of $\tilde{A_j}$.\\
Estimation from below: Choose $j+1 \le r \le l$ and divide the members of
$\{I_i^r\}_{i \in \tilde{A_r}}$ to good ones and bad ones according to
(c), i.e, $I_s^r$ is good if it is $(\lambda_j,\epsilon)$-separately
covered by $\{I_i^j \subset I_s^r\}$. We have at most $\eta_rK$,
$I_i^r$'s,
which are bad and at most $|\tilde{A_r}|$, $I_i^r$'s, which are good.
Every
bad $I_i^r$ rules out at most $\frac{N_r}{N_j}+2$ $i$'s in $A_j$ from
being
in $\tilde{A_j}$. Every good $I_i^r$ rules out at most
$\frac{N_r}{N_j}(\lambda_j+\epsilon)+2$, $i$'s in $A_j$ from being in
$\tilde{A_j}$. In total, the maximum number of $i$'s in $A_j$ that are not
in $\tilde{A_j}$ is at most:
\[ \sum_{r=j+1}^l|\tilde{A_r}|(\frac{N_r}{N_j}(\lambda_j+\epsilon)+2)+
\eta_rK(\frac{N_r}{N_j}+2)= (**)\] Note that because $[1,K]$ is
$(\lambda_r\nu_{r+1},f_r)$-separately covered by $\{I_i^r\}_{i \in
\tilde{A_r}}$, we must have
\[ |\tilde{A_r}| \le
\frac{K}{N_r}(\lambda_r\nu_{r+1}+f_r).\] Using this we get:
\[(**) \le \sum_{r=j+1}^l\frac{K}{N_r}(\lambda_r\nu_{r+1}+f_r)(\frac{N_r}{N_j}(\lambda_j+\epsilon)+2)+
\eta_rK(\frac{N_r}{N_j}+2)\]
\[=\sum_{r=j+1}^l\frac{K}{N_j} \lambda_r
\nu_{r+1}(\lambda_j+\epsilon)+ \frac{K}{N_j}(\lambda_j+\epsilon)f_r+
\frac{2K}{N_r}(\lambda_r \nu_{r+1}+f_r)+
\frac{K}{N_j}\eta_rN_r+2\eta_rK
\]
\[=\frac{K}{N_j}\lambda_j
(\sum_{r=j+1}^l\lambda_r\nu_{r+1})\]
\[
+\frac{K}{N_j}\sum_{r=j+1}^l\{\epsilon\lambda_r\nu_{r+1}
+(\lambda_j+\epsilon)f_r+2\frac{N_j}{N_r}
(\lambda_r\nu_{r+1}+f_r)+\eta_r(N_r+2N_j)\} =(\aleph)\] as mentioned
earlier $\sum_{j+1}^l\lambda_r\nu_{r+1}=1-\nu_j$ so we have that:
\[|\tilde{A_j}| \ge |A_j|- (\aleph) \ge
\frac{K}{N_j}(\lambda_j-\epsilon)-(\aleph)\]
\[=\frac{K}{N_j}\bigg\{\lambda_j\nu_j-
\Big\{\epsilon
+\sum_{r=j+1}^l\{\epsilon\lambda_r\nu_{r+1}+(\lambda_j+\epsilon)f_r+2\frac{N_j}{N_r}
(\lambda_r\nu_{r+1}+f_r)+\eta_r(N_r+2N_j)\}\Big\}\bigg\}\] note that

 \[|(\epsilon +\sum_{r=j+1}^l\{\epsilon\lambda_r\nu_{r+1}+(\lambda_j+\epsilon)f_r+2\frac{N_j}{N_r}
(\lambda_r\nu_{r+1}+f_r)+\eta_r(N_r+2N_j)\}|\]
\[\le \epsilon+\sum_{r=j+1}^l\{\epsilon+(1+\epsilon)f_r+2\frac{N_j}{N_r}
(1+f_r)+\eta_r(N_r+2N_j)\}\]

so if we will denote the last expression by
$\tilde{f_j}(N_i,\eta_i,\epsilon)$, then we see that $\tilde{f_j}$
satisfies $(*)$ and $ |\tilde{A_j}|\ge \frac{K}{N_j}(\lambda_j
\nu_{j+1}- \tilde{f_j})$.

Estimation from above: For every $j+1\le r\le l$, we have that
$|\tilde{A_r}|\ge\frac{K}{N_r}(\lambda_r\nu_{r+1}-f_r)$ and the
number of bad $I_i^r$'s is at most $\eta_rK$, so we must have at
least $\frac{K}{N_r}(\lambda_r\nu_{r+1}-f_r)-\eta_rK$ good
$I_i^r$'s. Every good $I_i^r$, rules out at least $\frac{N_r}{N_j}
(\lambda_j-\epsilon)$ $i$'s in $A_j$ from being in $\tilde{A_j}$.
So the number of $i$'s in $A_j$ that are not in $\tilde{A_j}$ is
at least:
\[
\sum_{r=j+1}^l\frac{N_r}{N_j}(\lambda_j-\epsilon)\{\frac{K}{N_r}(\lambda_r\nu_{r+1}-f_r)
-\eta_rK\}\] and so
\[|\tilde{A_j}|\le |A_j|-
\sum_{r=j+1}^l\frac{N_r}{N_j}(\lambda_j-\epsilon)\{\frac{K}{N_r}(\lambda_r\nu_{r+1}-f_r)
-\eta_rK\}\]
\[\le
\frac{K}{N_j}(\lambda_j+\epsilon)-
\sum_{r=j+1}^l\Big\{\frac{K}{N_j}\Big(\lambda_j
(\lambda_r\nu_{r+1}-f_r)-\epsilon (\lambda_r\nu_{r+1}-f_r)\Big)-
\frac{K}{N_j}\eta_rN_r(\lambda_j-\epsilon)\Big\}\]
\[=\frac{K}{N_j}\Big\{
\lambda_j\Big(1-\sum_{r=j+1}^l\lambda_r\nu_{r+1}\Big)
+\epsilon+\sum_{r=j+1}^l\Big(\lambda_jf_r+\epsilon
(\lambda_r\nu_{r+1}-f_r)
+\eta_rN_r(\lambda_j-\epsilon)\Big)\Big\}\]
\[\le \frac{K}{N_j}\Big\{
\lambda_j\nu_{j+1}+\epsilon+
\sum_{r=j+1}^l\Big(f_r+\epsilon(1+f_r)+\eta_rN_r(1+\epsilon)\Big)\Big\}\]
so if we will denote
\[\hat{f_j}(N_i,\eta_i,\epsilon)=\epsilon+
\sum_{r=j+1}^l\Big(f_r+\epsilon(1+f_r)+\eta_rN_r(1+\epsilon)\Big)\Big\}\]
then $\hat{f_j}$ satisfies $(*)$ and
$|\tilde{A_j}|\le\frac{K}{N_j}\Big(\lambda_j\nu_{j+1}
+\hat{f_j}\Big)$. Define $f_j=max(\tilde{f_j},\hat{f_j})$ and then
we have that $f_j$ satisfies $(*)$ and
\[|\frac{|\tilde{A_j}|N_j}{K}-
\lambda_j\nu_{j+1}|\le f_j.\] We have defined $\tilde{A_j}\subset
A_j$ and a positive function $f_j$, that satisfies $(*)$, such
that $\{\{I_i^r\}_{i\in \tilde{A_r}}\}_{r=j}^l$ is a separated
collection and $[1,K]$ is $(\lambda_j\nu_{j+1},f_j)$-separately
covered by
$\{I_i^j\}_{i\in \tilde{A_j}}$.\\
We continue this way and define sets $\tilde{A_j}\subset A_j$ and
positive functions $f_j$, $j=1\dots l$, such that $\{\{I_i^j\}_
{i\in \tilde{A_j}}\}_{j=1}^l$, is a separated collection and
$[1,K]$ is $(\lambda_j\nu_{j+1},f_j)$-separately covered by
$\{I_i^j\}_{i\in \tilde{A_j}}$.\\
Note that this means:
\[K\Big(\sum_{j=1}^l\lambda_j\nu_{j+1}-\sum_{j=1}^lf_r\Big)
\le|\bigcup_{j=1}^l\bigcup_{i\in \tilde{A_j}}I_i^j| \le
K\Big(\sum_{j=1}^l\lambda_j\nu_{j+1}+\sum_{j=1}^lf_r\Big)\] and
so, if we will define $\varphi = \sum f_j$, then $\varphi$
satisfies $(*)$ and $\{\{I_i^j\}_ {i\in \tilde{A_j}}\}_{j=1}^l$,
is a $(1-\nu_1,\varphi)$- separated cover of $[1,K]$.
\end{proof}

Before turning to the proof of $theorem\; 4.2$, let us present some
terminology. In the following $\cU=\{U_1\dots U_M\}$, is a cover of $X$.
For any $\rho>0$, we can find a partition $\beta\succeq\cU$, such that
$\cN(\cU,\rho)=\cN(\beta,\rho)$. Namely, we choose a subset of $\cU$, of
$N=\cN(\cU,\rho)$ elements, that covers $X$ up to a set of measure
$<\rho$, $\{U_{i1}\dots U_{iN}\}$ and define
$C_1=U_{i1}$,
$C_j=U_{ij}\setminus\bigcup_{m=1}^{j-1}U_{im}$, $j=2\dots N$. The $C_j$'s
are disjoint, $C_j \subset U_{ij}$ and
$\bigcup_1^NC_j=\bigcup_{j=1}^NU_{ij}$.
Extend the collection $\{C_j\}_{j=1}^N$ to a partition, $\beta$, refining
$\cU$, in some way. Then, because $\beta\succeq\cU$, we have
$\cN(\beta,\rho)\ge N$ and from our construction, it follows that
$\cN(\beta,\rho)\le N$.
\begin{itemize}
\item We call such a partition, a $\rho$-good partition for $\cU$.
\end{itemize}
If $(X,\cB,\mu,T)$ is aperiodic and $N\in\fN,\;\rho,\delta>0$ are given,
then for a $\rho$-good partition $\beta$, for $\cU_0^{N-1}$, we can
construct a strong Rohlin tower with height $N+1$ and error $<\delta$.
Let $\tilde{B}$ denote the base of the tower and let $B\subset\tilde{B}$
be a union of $\cN(\beta,\rho)$ atoms of $\beta|_{\tilde{B}}$ that covers
$\tilde{B}$ up to a set of measure, less than $\rho\mu(\tilde{B})$.
\begin{itemize}
\item We call $(\beta,\tilde{B},B)$, a good base for
$(\cU,N,\rho,\delta)$.
\item For a set $J\subset\fN$, a $(\cU,J)$-name, is a function
$f:J\rightarrow \{1\dots M\} $.
\item $f$ is a name of $x\in X$, if $x\in \bigcap_{j\in J}T^{-j}U_{f(j)}.$
\item We denote the set of elements of $X$ with $f$ as a name by $S_f$.
\item A set of $(\cU,J)$-names, $\{f_i\}$, covers a set $C\in \cB$, if
$C\subset\bigcup_iS_{f_i}$.
\end{itemize}
In the sequel, we will want to estimate the number of elements of
$\cU_0^{N-1}$, needed to cover a set $C\in\cB$, i.e, we will want to
estimate the number of $(\cU,[0,N-1])$-names needed to cover $C$.
The usual way to do so is to find a collection of disjoint sets
$J_i\subset [0,N-1]$ $i=1\dots m$, that covers most of $[0,N-1]$, such
that
we can bound the number of $(\cU,J_i)$-names needed to cover $C$.
If we can cover $C$ by $R_i$, $(\cU,J_i)$-names, $\{f_m^i\}_{m=1}^{R_i}$,
then the set\\
 $\Gamma=\{f:[0,N-1]\rightarrow\{1\dots M\}|\quad f|_{J_i}
\in \{f_m^i\}_{m=1}^{R_i}\}$, of $(\cU,[0,N-1])$-names, covers $C$ and
contains $\prod R_i\cdot M^{N-\sum|J_i|}$ elements.\\
This situation occurs in our proofs in the following way: Let
$(\beta,\tilde{B},B)$, be a good base for $(\cU,N,\rho,\delta)$
and $K>>N$. Set $C$ to be the set of elements of $X$ that visits
$B$ at times $i_1<\dots <i_m$ between $0$ to $K-N$ (under the
action of $T$). Then we can cover $C$ by no more than
$\cN(\beta,\rho)$, $(\cU,[i_j,i_j+N-1])$-names. We can now turn to
the proof of $theorem\;4.2$.
\begin{proof}
($theorem\;4.2$): If $(X,\cB,\mu,T)$ is periodic, it follows from the
ergodicity, that the system is a cyclic permutation on a finite set of
atoms and for every $0<\epsilon <1$ we have
$lim\frac{1}{n}log\cN(\cU_0^{n-1},\epsilon)=0$.
We assume, then, that the system is aperiodic and thus we are able to use
the Strong Rohlin Lemma. Given $0<\rho_2<\rho_1<1$, we need to show that
the limits: $lim\frac{1}{n}log\cN(\cU_0^{n-1},\rho_i) \; i=1,2$, exist
and
are equal. Note that for every $n$, we have that $\cN(\cU_0^{n-1},\rho_1)
\le\cN(\cU_0^{n-1},\rho_2)$ and thus
$limsup\frac{1}{n}log\cN(\cU_0^{n-1},\rho_1)\le
liminf\frac{1}{n}log\cN(\cU_0^{n-1},\rho_2)$, so it's enough to prove that
$$limsup\frac{1}{n}log\cN(\cU_0^{n-1},\rho_2)\le
liminf\frac{1}{n}log\cN(\cU_0^{n-1},\rho_1).$$
Let $0<\epsilon_0<\frac{1}{2}$, be given and denote:\\
 $h_0=
liminf\frac{1}{n}log\cN(\cU_0^{n-1},\rho_1)$, $L=\{n\in\fN|\;|h_0-
\frac{1}{n}log\cN(\cU_0^{n-1},\rho_1)|<\epsilon_0\}$,\\
so $L$ contains arbitrarily large numbers. Choose $\ell\in\fN$, large
enough so that
\[\big(\frac{1}{2}(1+\rho_1)\big)^\ell logM<\epsilon_0,\qquad
\big(\frac{1}{2}(1+\rho_1)\big)^\ell+\epsilon_0<\frac{1}{2}\qquad
(*).\] \textbf{The towers construction}: Remember the function $\varphi$
from the combinatorial lemma (Lemma 4.3). It satisfies:
\[\limsup_{\epsilon\rightarrow 0}
\limsup_{N_1\rightarrow\infty}\limsup_{\eta_1\rightarrow 0}
\dots
\limsup_{N_\ell\rightarrow\infty}\limsup_{\eta_\ell\rightarrow 0}
\varphi(N_i,\eta_i,\epsilon)=0\]
so we can choose $\epsilon>0$, small enough, such that
\[\limsup_{N_1\rightarrow\infty}\limsup_{\eta_1\rightarrow 0}
\dots
\limsup_{N_\ell\rightarrow\infty}\limsup_{\eta_\ell\rightarrow 0}
\varphi(N_i,\eta_i,\epsilon)<\epsilon_0.\]
Choose a small enough $\delta>0$ (in a manner specified later). Choose
$N_1\in L$, large enough, such that
\[\limsup_{\eta_1\rightarrow 0}
\dots
\limsup_{N_\ell\rightarrow\infty}\limsup_{\eta_\ell\rightarrow 0}
\varphi(N_i,\eta_i,\epsilon)<\epsilon_0.\]
Find a good base $(\beta_1,\tilde{B_1},B_1)$, for
$(\cU,N_1,\rho_1,\delta)$.
Choose $\eta_1>0$, small enough, such that
\[\limsup_{N_2\rightarrow\infty}\limsup_{\eta_2\rightarrow 0}
\dots
\limsup_{N_\ell\rightarrow\infty}\limsup_{\eta_\ell\rightarrow 0}
\varphi(N_i,\eta_i,\epsilon)<\epsilon_0.\]
From the ergodicity, we can choose $N_2\in L$, large enough, such that
\begin{itemize}
\item $\limsup_{\eta_2\rightarrow 0}
\dots
\limsup_{N_\ell\rightarrow\infty}\limsup_{\eta_\ell\rightarrow 0}
\varphi(N_i,\eta_i,\epsilon)<\epsilon_0.$
\item $\mu\{x\;|\;|\frac{1}{N_2}\sum_{r=0}^{N_2-N_1}\chi_{B_1}
(T^rx)-\mu(B_1)|<\frac{\epsilon}{N_1}\}>1-\eta_1.$
\end{itemize}
Find a good base, $(\beta_2,\tilde{B_2},B_2)$, for
$(\cU,N_2,\rho_1,\delta).$
Choose $\eta_2>0$, small enough, such that
\[\limsup_{N_3\rightarrow\infty}\limsup_{\eta_3\rightarrow 0}
\dots
\limsup_{N_\ell\rightarrow\infty}\limsup_{\eta_\ell\rightarrow 0}
\varphi(N_i,\eta_i,\epsilon)<\epsilon_0.\]
Again, from the ergodicity, we can choose $N_3\in L$, such that
\begin{itemize}
\item $\limsup_{\eta_3\rightarrow 0}
\dots
\limsup_{N_\ell\rightarrow\infty}\limsup_{\eta_\ell\rightarrow 0}
\varphi(N_i,\eta_i,\epsilon)<\epsilon_0.$
\item $\mu\{x\;|\;|\frac{1}{N_3}\sum_{r=0}^{N_3-N_j}\chi_{B_j}
(T^rx)-\mu(B_j)|<\frac{\epsilon}{N_j}\;j=1,2\}>1-\eta_2.$
\end{itemize}
In this way we construct, inductively, $N_1<N_2\dots<N_\ell$ (all from
$L$), $\eta_1\dots\eta_\ell$ and good bases $(\beta_j,\tilde{B_j},B_j)$,
for $(\cU,N_j,\rho_1,\delta)$, such that $\varphi(N_i,\eta_i,\epsilon)<
\epsilon_0$
and if we denote
\[F_j=\{x\;|\;|\frac{1}{N_j}\sum_{r=0}^{N_j-N_i}\chi_{B_i}(T^rx)-\mu(B_i)
|<\frac{\epsilon}{N_i}\;i=1\dots j-1\}\]
then, $\mu(F_j)>1-\eta_j$.\\
Define
\[E_K=\{x\;|\;
\frac{1}{K}\sum_{r=0}^{K-N_j}\chi_{F_j}(T^rx)>1-\eta_j,\;
|\frac{1}{K}\sum_{r=0}^{K-N_j}\chi_{B_j}(T^rx)-\mu(B_j)|<\frac{\epsilon}{N_j}
\quad j=1\dots\ell\}.\]
From the ergodicity, we know that there is a $K_0$, such that, for any
$K>K_0$, we have $\mu(E_K)>\rho_2$. Fix $K>K_0$, we shall show that we can
cover $E_K$, by "few" $(\cU,[0,K-1])$-names. For a fixed $x\in E_K$ denote
\[A_j=\{0\le m\le K-N_j\;|T^mx\in B_j\}\]
and for every $i\in A_j$, let
$I_i^j=[i,i+N_j-1]$.We claim that the collections $\{I_i^j\}_{i\in A_j}$
$j=1\dots\ell$, satisfies conditions $(a),(b),(c)$ from the combinatorial
lemma ($lemma\;4.3$), with $\lambda_j=N_j\mu(B_j)$. To see this, note
first, that because the height of the $j$'th tower was $N_j+1$, we have
that each collection $\{I_i^j\}_{i\in A_j}$, is separated.\\
$(a)$ By definition $|I_i^j|=N_j$.\\
$(b)$ because $x\in E_k$, we know that
$|\frac{1}{K}\sum_{r=0}^{K-N_j}\chi_{B_j}(T^rx)-\mu(B_j)|<\frac{\epsilon}{N_j}$
and thus, $|\frac{N_j|A_j|}{K}-\lambda_j|<\epsilon$.
So the $\{I_i^j\}_{i\in A_j}$ forms a $(\lambda_j,\epsilon)$-separated
cover of $[0,K-1]$.\\
$(c)$ For $1<r\le\ell$, we know from the fact that $x\in E_K$, that
$\frac{1}{K}\sum_{s=0}^{K-N_r}\chi_{F_r}(T^sx)>1-\eta_r$ and thus we have
$\frac{1}{K}\sum_{s=0}^{K-N_r}\chi_{F_r^c}(T^sx)<\eta_r$. If we use the
definition of $F_r$, this becomes
\[\frac{1}{K}\#\{0\le s\le K-N_r\;|\;\exists\;1\le j\le r-1\;
|\frac{1}{N_r}\sum_{i=0}^{N_r-N_j}\chi_{B_j}(T^{i+s}x)-\mu(B_j)|\ge
\frac{\epsilon}{N_j}\}<\eta_r\]
or equivalently
\[\#\{0\le s\le K-N_r\;|\;\exists\;1\le j\le r-1\;|\frac{N_j}{N_r}
\#\{i\;|\; i+s\in A_j\}-\lambda_j|\ge\epsilon\}<\eta_rK\]
so if we choose $1\le j<r\le\ell$, we must have
\[\#\{J\subset[0,K-1]\;|\;|J|=N_r,\;|\frac{N_j}{N_r}\#
\{i\;|\;I_i^j\subset J\}-\lambda_j|\ge\epsilon\}<\eta_r K.\]
In words, the number of subintervals of $[0,K-1]$ of length $N_r$, $J$,
which are not $(\lambda_j,\epsilon)$-separately covered, by those
$I_i^j$ which are contained in $J$ is less than $\eta_r K$, as we wanted.\\
Using the combinatorial lemma, we can choose for every $x\in E_K$
a separated collection $\{\{I_i^j(x)\}_{i\in
\tilde{A_j}}\}_{j=1}^\ell$ that covers at least
$K\big(1-\nu_1(\vec{\lambda})-\epsilon_0\big)$ elements of
$[0,K-1]$. Because these collections are separated, there is a
$1-1$ correspondence between them and their complements. Hence,
the number of such covers is less than
\[\psi(K,\lambda_j,\epsilon_0)=
\sum_{j\le\big(\nu_1+\epsilon_0\big)K}\binom{K}{j}\qquad (**)\]
Fix such a collection $\{\{I_i^j\}_{i\in\tilde{A_j}}\}_{j=1}^\ell$
and set
\[C=\{x\in E_K\;|\;\{I_i^j(x)\}=\{I_i^j\}\;\}.\]
From the construction we see that for every $1\le j\le\ell$ we can
cover $B_j$ by no more than $2^{N_j(h_0+\epsilon_0)}$
$(\cU,[0,N_j-1])$-names, thus we can cover $C$ by no more than
$2^{N_j(h_0+\epsilon_0)}$ $(\cU,I_i^j)$-names. So the number of
$(\cU,[0,K-1])$-names, needed to cover $C$ is at most
\[\prod_{j=1}^\ell(2^{N_j(h_0+\epsilon_0)})^{|\tilde{A_j}|}\cdot
M^{K(\nu_1+\epsilon_0)}=2^{(\sum_jN_j|\tilde{A_j}|)(h_0
+\epsilon_0)}\cdot M^{K(\nu_1+\epsilon_0)}\]
\[\le 2^{K(h_0+\epsilon_0)}\cdot M^{K(\nu_1+\epsilon_0)}.\]
Finally we get from this and $(**)$ that
\[\cN(\cU_0^{K-1}, \rho_2)\le
\psi(K,\lambda_j,\epsilon_0)\cdot 2^{K(h_0+\epsilon_0)}\cdot
M^{K(\nu_1+\epsilon_0)}\] and so
\[\frac{1}{K}log\cN(\cU_0^{K-1}, \rho_2)\le
\frac{1}{K}log\psi(K,\lambda_j,\epsilon_0)+h_0+\epsilon_0+
\nu_1logM+\epsilon_0logM.\] If, in the construction of the towers,
we choose $\delta$ small enough and $N_1$ large enough, we can
ensure that $\lambda_j=N_j\mu(B_j)>\frac{1-\rho_1}{2}$ and thus
$1-\lambda_j<\frac{1+\rho_1}{2}\Rightarrow \nu_1<
(\frac{1+\rho_1}{2})^\ell$ and so, from $(*)$ we have that
\[\nu_1logM<\epsilon_0\qquad\nu_1+\epsilon_0\le
\frac{1}{2}\] hence, from $lemma\;2.3$
\[\psi(K,\lambda_j,\epsilon_0)\le 2^{K\cdot
H((\frac{1+\rho_1}{2})^\ell+\epsilon_0)}\] hence
\[\frac{1}{K}log\cN(\cU_0^{K-1},\rho_2)\le
h_0+\epsilon_0(2+logM)+H((\frac{1+\rho_1}{2})^\ell+\epsilon_0)\Rightarrow\]
\[\limsup_K\frac{1}{K}log\cN(\cU_0^{K-1},\rho_2)\le
h_0+\epsilon_0(2+logM)+H((\frac{1+\rho_1}{2})^\ell+\epsilon_0)\]
letting $\ell\to\infty$ and $\epsilon_0\to 0$ we get
\[\limsup_K\frac{1}{K}log\cN(\cU_0^{K-1},\rho_2)\le h_0\]
as desired.

 \end{proof}

After proving $theorem\;4.2$, we can define, for an ergodic m.t.d.s,
$(X,\cB,\mu,T)$ and a cover $\cU=\{U_1\dots U_M\}$ of $X$, a notion of
measure theoretical entropy in the following way:
\[h_\mu^e(\cU,T)=lim\frac{1}{n}log\cN(\cU_0^{n-1},\epsilon)\quad where
\quad 0<\epsilon<1.\]
Often we omit $T$ and write $h_\mu^e(\cU)$.
\begin{thm}
$h_\mu^e(\cU)=h_\mu^+(\cU)$
\end{thm}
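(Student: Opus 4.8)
The plan is to prove the two inequalities separately. For the easy direction, $h_\mu^e(\cU) \le h_\mu^+(\cU)$: pick any partition $\alpha \succeq \cU$. Then every atom of $\alpha_0^{n-1}$ is contained in an element of $\cU_0^{n-1}$, so for any $0<\epsilon<1$ we have $\cN(\cU_0^{n-1},\epsilon) \le \cN(\alpha_0^{n-1},\epsilon)$. Taking $\frac1n\log$ and letting $n\to\infty$, Theorem 4.1 gives $h_\mu^e(\cU) \le h_\mu(\alpha,T)$. Since $\alpha \succeq \cU$ was arbitrary, taking the infimum yields $h_\mu^e(\cU) \le h_\mu^+(\cU)$. (As in Theorem 4.2, the periodic case is trivial, so one may assume the system is aperiodic.)

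For the reverse inequality $h_\mu^+(\cU) \le h_\mu^e(\cU)$, the idea is to manufacture, for a prescribed small error, a single partition $\alpha \succeq \cU$ whose entropy is close to $h_\mu^e(\cU)$. Fix a large $N$ and let $\beta$ be a $\rho$-good partition for $\cU_0^{N-1}$, so that $\cN(\beta,\rho) = \cN(\cU_0^{N-1},\rho) =: R_N$, where $\frac1N\log R_N$ is close to $h_\mu^e(\cU)$ for $N$ large (and $\rho$ fixed, say $\rho = 1/2$). Build a strong Rohlin tower of height $N+1$ and small error $\delta$ with respect to $\beta$, with base $\tilde B$, and inside $\tilde B$ take a union $B$ of $R_N$ atoms of $\beta|_{\tilde B}$ covering $\tilde B$ up to measure $< \rho\,\mu(\tilde B)$ — i.e.\ a good base $(\beta,\tilde B, B)$ for $(\cU,N,\rho,\delta)$. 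The natural partition to test against $h_\mu^+$ is then: take the partition $\zeta$ of the tower that, on each level $T^i\tilde B$, uses the $R_N$ selected atoms of $B$ as blocks together with one "garbage" block for the $<\rho\mu(\tilde B)$ leftover, then refine the whole thing by a fixed partition $\xi \succeq \cU$ to ensure $\succeq \cU$, and on the $\delta$-small complement of the tower use another refinement of $\cU$. One shows $\zeta \succeq \cU$ and estimates $h_\mu(\zeta,T)$: the entropy contribution from the $R_N$-block structure on a tower of height $N$ is roughly $\frac1N\log R_N \approx h_\mu^e(\cU)$, while the garbage level (frequency $\le \rho$), the tower complement (measure $\le \delta$), and the distortion from $\xi$ contribute $O(H(\rho)+H(\delta)+\delta\log M + \tfrac1N \log M)$ to $\frac1N H_\mu(\zeta_0^{N-1})$, hence $h_\mu^+(\cU) \le h_\mu(\zeta,T) \le h_\mu^e(\cU) + (\text{error})$. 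Letting $\rho,\delta\to 0$ and $N\to\infty$ in the right order finishes it.

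The main obstacle is the second inequality: one must carefully control how the entropy of the constructed partition $\zeta$ compares to $\frac1N\log R_N$. The subtlety is that $R_N$ counts atoms needed to cover up to measure $\rho$, so the "garbage" level of the tower carries positive measure $\le \rho\mu(\tilde B)$ per level and contributes an $H(\rho)$-type term to each coordinate's entropy; one needs $\rho$ small, which forces revisiting Theorem 4.1/4.2 to know $\frac1N\log\cN(\cU_0^{N-1},\rho)$ is still close to $h_\mu^e(\cU)$ for small $\rho$ — but that is exactly the $\epsilon$-independence granted by Theorem 4.2, so it is available. A secondary technical point is that $h_\mu(\zeta,T)$ is bounded above by $\liminf \frac1n H_\mu(\zeta_0^{n-1})$, and one wants the bound at $n = N$ (a single scale); this is fine because $\frac1n H_\mu(\zeta_0^{n-1})$ decreases to its limit, so the $n=N$ estimate is an upper bound for $h_\mu(\zeta,T)$. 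Assembling these estimates and taking limits in the correct order ($\delta \to 0$, then $N\to\infty$, then $\rho\to 0$) gives $h_\mu^+(\cU) \le h_\mu^e(\cU)$, completing the proof.
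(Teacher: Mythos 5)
Your first inequality is correct and is exactly the paper's argument. The second inequality, however, has a genuine gap, and it is located precisely at the step you gloss over: ``refine the whole thing by a fixed partition $\xi\succeq\cU$ to ensure $\succeq\cU$.'' Once $\zeta\succeq\xi$ for an arbitrary partition $\xi$ refining $\cU$, you have $h_\mu(\zeta,T)\ge h_\mu(\xi,T)\ge h_\mu^+(\cU)$, and there is no reason for $h_\mu(\xi,T)$ (or for $H_\mu(\xi\,|\,\bigvee_i T^i\zeta')$, where $\zeta'$ is the bare tower partition) to be small --- this is exactly the quantity the theorem is trying to control, so treating ``the distortion from $\xi$'' as an $O(\cdot)$ error term makes the argument circular. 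The paper's construction is designed to avoid exactly this: a good base $(\beta,\tilde B,B)$ tags each selected atom $B_{i_0\dots i_{n-1}}$ of $\beta|_{\tilde B}$ with a $\cU_0^{n-1}$-name satisfying $T^j(B_{i_0\dots i_{n-1}})\subset U_{i_j}$, so the $M$-atom partition $\hat A_m=\bigcup\{T^j(B_{i_0\dots i_{n-1}})\;|\;i_j=m\}$ refines $\cU$ on the tower \emph{for free}, and only the leftover set of measure $\le\epsilon+\delta$ needs an (at most $M$-atom) arbitrary completion. The column and level data that your $\zeta$ records explicitly is used in the paper only to constrain the $\alpha$-names, not to build the partition.

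There is a second, quantitative problem with your plan even for the bare tower partition: bounding $h_\mu(\zeta,T)$ by $\frac1N H_\mu(\zeta_0^{N-1})$ at the single scale $n=N$ is a valid upper bound (the sequence is indeed decreasing), but it is generically about $\frac2N\log R_N$, not $\frac1N\log R_N$, because a window of length $N$ over a tower of height $N+1$ typically straddles two columns, so $\zeta_0^{N-1}$ has on the order of $N\cdot R_N^2$ atoms. You must work at scales $k\gg N$, where the $k/N$ column-blocks contribute $\frac{k}{N}\log R_N$ in total. The paper does this by covering a set $G_k$ of measure $>1-\eta$ with at most $\psi(k,n,\eta)\cdot N^{k/n}\cdot M^{\eta k+2n}$ many $(\alpha,[0,k-1])$-names and then converting this count into $h_\mu(\alpha)$ via the covering-number formula of Theorem 4.1 (which also painlessly absorbs the garbage level and the tower complement, since points spending too much time there are simply discarded into the measure-$\eta$ exceptional set). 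Your order of limits is the right shape, but without the $M$-atom name-indexed partition and the long-window count, the partition you construct does not have entropy close to $h_\mu^e(\cU)$.
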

\begin{proof}
As before, if the system is periodic then $h_\mu^e(\cU)=h_\mu^+(\cU)=0$.
We assume, then ,that the system is aperiodic.
For every partition $\alpha\succeq\cU,\;n\in\fN$ and $0<\epsilon <1$, we
have that
$\cN(\cU_0^{n-1},\epsilon)\le \cN(\alpha_0^{n-1},\epsilon)$
and therefore
\[h_\mu^e(\cU)=lim\frac{1}{n}log\cN(\cU_0^{n-1},\epsilon)\le
lim\frac{1}{n}log\cN(\alpha_0^{n-1},\epsilon)=h_\mu(\alpha)\]
\[\Rightarrow h_\mu^e(\cU)\le h_\mu^+(\cU)\]
To prove the other inequality, we shall show that for a given $0<\epsilon
<\frac{1}{4}$ and $n\in\fN$ we have:
\[h_\mu^+(\cU)\le
\frac{1}{n}log\cN(\cU_0^{n-1},\epsilon)+\sqrt{\epsilon}
\cdot logM+H(\sqrt{\epsilon}).\qquad (*)\]
Once we prove $(*)$, we are done, for letting $n\to\infty$ we get
$h_\mu^+(\cU)\le h_\mu^e(\cU)+\sqrt{\epsilon}\cdot
logM+H(\sqrt{\epsilon})$ and
now,
letting $\epsilon\to 0$ we get $h_\mu^+(\cU)\le h_\mu^e(\cU)$ as
desired.\\
Proof of $(*)$: choose $\delta>0$, such that $\epsilon+\delta<\frac{1}{4}$
and find a good base $(\beta,\tilde{B},B)$ for $(\cU,n,\epsilon,\delta)$.
(Now we take $\tilde{B}$ to be a base for a strong Rohlin tower of height
$N$ and error $<\delta$ and not of height $N+1$ as before).
Set $N=\cN(\cU_0^{n-1},\epsilon)$, so $B$ is the union of $N$
elements of $\beta|_{\tilde{B}}$. We index these elements by sequences
$i_0\dots i_{n-1}$, such that  if $B_{i_0\dots i_{n-1}}$ is one, then
$T^j(B_{i_0\dots i_{n-1}})\subset U_{i_j}$, for every $0\le j\le n-1$. We
have that $\mu(X\setminus\bigcup_0^{n-1}T^i(B))\le \epsilon+\delta$.
Let $\hat{\alpha}=\{\hat{A}_1\dots \hat{A}_M\}$ be the partition of
\[E=\bigcup_0^{n-1}T^i(B)\]
defined by
\[\hat{A}_m=\bigcup\{T^j(B_{i_0\dots i_{n-1}})\;|\;j\in
[0,n-1].\;i_j=m\}.\]
Note that $\hat{A}_m\subset U_m$, for every $1\le m\le M$.
Extend $\hat{\alpha}$, to a partition, $\alpha$, of $X$, refining $\cU$,
in some way. Set $\eta^2=\epsilon+\delta$ and define for every $k>n$
$f_k(x)=\frac{1}{k}\sum_o^{k-1}\chi_E(T^jx)$. We have that $0\le f_k\le
1$ and $\int f_k>1-\eta^2$, so if we will denote:
\[G_k=\{x\;|\;f_k(x)>1-\eta\}\]
then,
\[\eta\cdot\mu(G_k^c)\le\int_{G_k^c}1-f_k\le\int 1-f_k\le\eta^2\]
\[\Rightarrow\mu(G_k)\ge 1-\eta.\]
We shall show that we can cover $G_k$, by "few" $(\alpha,[0,k-1])$-names.
Partition $G_k$ according to the values of $0\le i\le k-n$, such that
$T^ix\in B$.
Note that if $x\in G_k$ and $0\le i_1< \dots <i_m\le k-n$, are the times
in which $x$ visits $B$, then the collection $\{[i_j,i_j+n-1]\}_{j=1}^m$
covers all but at most $\eta k+2n$ elements of $[0,k-1]$.
Because each element of this partition defines a collection
of subintervals of $[0,k-1]$, of length $n$, that covers all but at most
$\eta k+2n$, elements of $[0,k-1]$, in a $1-1$ manner, we have that the
number of elements in the partition of $G_k$ is at most
\[\psi(k,n,\eta)=\sum_{j<(\eta+\frac{2n}{k})k}\binom{k}{j}\]
We fix an element $C$ of this partition of $G_k$ and want to estimate the
number of $(\alpha,[0,k-1])$-names, needed to cover it. If $0\le i_1<
\dots <i_m\le k-n$ are the times elements of $C$ visit $B$, then we need
at most $N$, $(\alpha,[i_j,i_j+n-1])$-names, to cover $C$. Because the
size of $[0,k-1]\setminus\bigcup_j[i_j,i_j+n-1]$, is at most $\eta k+2n$,
we need at most $N^{\frac{k}{n}}\cdot M^{\eta k+2n}$
$\quad(\alpha,[0,k-1])$-names, to cover $C$. Finally, we have that we can
cover $G_k$, by no more than:
\[\psi(k,n,\eta)\cdot N^{\frac{k}{n}}\cdot M^{\eta k+2n}\]
$(\alpha,[0,k-1])$-names. Because $\mu(G_k)>1-\eta$, this means that:
\[\frac{1}{k}log\cN(\alpha_0^{k-1},\eta)\le\frac{1}{k}log\psi(k,n,\eta)
+\frac{1}{n}logN+(\eta+\frac{2n}{k})logM.\]
Recall that once $(\eta+\frac{2n}{k})<\frac{1}{2}$, we have
$\psi(k,n,\eta)\le 2^{k\cdot H(\eta+\frac{2n}{k})}$ and so
\[h_\mu(\alpha)=lim\frac{1}{k}log\cN(\alpha_0^{k-1},\eta)\le
\frac{1}{n}log\cN(\cU_0^{n-1},\epsilon)+\eta\cdot logM+H(\eta)\]
so
\[h_\mu^+(\cU)\le
\frac{1}{n}log\cN(\cU_0^{n-1},\epsilon)+\sqrt{\epsilon+\delta}\cdot
logM+H(\sqrt{\epsilon+\delta})\]
Letting $\delta\to 0$ we get
\[h_\mu^+(\cU)\le
\frac{1}{n}log\cN(\cU_0^{n-1},\epsilon)+\sqrt{\epsilon}\cdot
logM+H(\sqrt{\epsilon})\]
as desired.

\end{proof}

\begin{thm}
$h_\mu^+(\cU)=h_\mu^-(\cU)$
\end{thm}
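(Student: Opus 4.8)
The plan is to exploit the two theorems already proved in this section, namely Theorem 4.4 ($h_\mu^e(\cU) = h_\mu^+(\cU)$) and the identity in Proposition 3.8(3), $h_\mu^-(\cU,T) = \lim_n \frac{1}{n} h_\mu^+(\cU_0^{n-1}, T^n)$. By Proposition 3.8(1) we already have $h_\mu^-(\cU) \le h_\mu^+(\cU)$, so the whole content is the reverse inequality $h_\mu^+(\cU) \le h_\mu^-(\cU)$. As always we may dispose of the periodic case immediately (both sides are $0$), so assume the system is aperiodic.

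First I would translate everything into the $\cN(\cdot,\epsilon)$ language. Applying Theorem 4.4 to the system $(X,\cB,\mu,T^m)$ and the cover $\cU_0^{m-1}$ gives $h_\mu^+(\cU_0^{m-1},T^m) = h_\mu^e(\cU_0^{m-1},T^m) = \lim_k \frac{1}{k}\log \cN\big((\cU_0^{m-1})_0^{k-1},\epsilon\big)$, where the join on the right is taken with respect to $T^m$. Now observe that $(\cU_0^{m-1})_0^{k-1}$ (join under $T^m$) refines $\cU_0^{km-1}$ (join under $T$), and conversely $\cU_0^{km-1} \succeq (\cU_0^{m-1})_0^{k-1}$; in fact these two covers are literally equal, since $\bigvee_{i=0}^{k-1} T^{-mi}\big(\bigvee_{j=0}^{m-1} T^{-j}\cU\big) = \bigvee_{s=0}^{km-1} T^{-s}\cU$. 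Hence $\cN\big((\cU_0^{m-1})_0^{k-1},\epsilon\big) = \cN(\cU_0^{km-1},\epsilon)$, and therefore
\[
h_\mu^+(\cU_0^{m-1},T^m) = \lim_k \frac{1}{k}\log \cN(\cU_0^{km-1},\epsilon) = m \cdot \lim_n \frac{1}{n}\log \cN(\cU_0^{n-1},\epsilon) = m\, h_\mu^e(\cU,T),
\]
using Theorem 4.2 to pass from the subsequence $\{km\}$ to the full limit. Dividing by $m$ and applying Theorem 4.4 once more (this time to $T$), $\frac{1}{m} h_\mu^+(\cU_0^{m-1},T^m) = h_\mu^e(\cU,T) = h_\mu^+(\cU,T)$ for every $m$.

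Finally I would feed this into Proposition 3.8(3): since each term of the sequence $\frac{1}{m} h_\mu^+(\cU_0^{m-1},T^m)$ equals $h_\mu^+(\cU,T)$, the limit defining $h_\mu^-(\cU,T)$ is also $h_\mu^+(\cU,T)$, which is the desired reverse inequality (indeed equality). I do not expect a genuine obstacle here — the argument is essentially a bookkeeping exercise chaining together Theorems 4.2 and 4.4 with Proposition 3.8 — but the one point requiring care is the identification $\cN\big((\cU_0^{m-1})_0^{k-1},\epsilon\big) = \cN(\cU_0^{km-1},\epsilon)$ and, more importantly, making sure $h_\mu^e$ for the power system $(X,\cB,\mu,T^m)$ is well-defined, i.e. that $(X,\cB,\mu,T^m)$ is still ergodic. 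This is automatic in the aperiodic case only after possibly passing through the ergodic decomposition of $T^m$ — but since we only invoke Theorem 4.2 and Theorem 4.4 for $T^m$, and those hold for any ergodic m.t.d.s., if $T^m$ fails to be ergodic one argues instead directly: $\cN(\cU_0^{km-1},\epsilon)$ is intrinsic to $(X,\cB,\mu)$ and the cover, so the subsequence computation above goes through verbatim using only Theorem 4.2 for $T$, bypassing $T^m$ entirely. With that caveat handled, combining with Proposition 3.8(1) yields $h_\mu^+(\cU) = h_\mu^-(\cU)$.
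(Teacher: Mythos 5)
Your plan is exactly the shortcut that the paper itself records in the remarks following the proof of Theorem 4.5: if $(X,T)$ is \emph{totally} ergodic, then $h_\mu^e(\cU,T)=\frac{1}{n}h_\mu^e(\cU_0^{n-1},T^n)$, Theorem 4.4 applied to each power $T^n$ gives $h_\mu^+(\cU,T)=\frac{1}{n}h_\mu^+(\cU_0^{n-1},T^n)$, and Proposition 3.6(3) finishes. The reason the paper does not adopt this as the proof, and instead gives a direct argument (strong Rohlin tower over a partition $\beta\succeq\cU_0^{n-1}$ with $\frac{1}{n}H_\mu(\beta)\le h_\mu^-(\cU)+\epsilon$, plus the word-counting Lemma 4.6), is precisely the point you flag at the end: an ergodic $T$ need not have ergodic powers, and the entire chain hinges on applying Theorem 4.4 to the system $(X,\cB,\mu,T^m)$.

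Your attempted patch does not close this gap. The identification $\cN\big((\cU_0^{m-1})_0^{k-1},\epsilon\big)=\cN(\cU_0^{km-1},\epsilon)$ and the subsequence argument via Theorem 4.2 only show that the \emph{counting} quantity for $T^m$ equals $m\,h_\mu^e(\cU,T)$; they say nothing about $h_\mu^+(\cU_0^{m-1},T^m)$, which is defined as $\inf_{\alpha\succeq\cU_0^{m-1}}h_\mu(\alpha,T^m)$ and is the quantity appearing in Proposition 3.6(3). Equating it with the counting quantity is exactly Theorem 4.4 (and, underneath it, Theorem 4.1) for the system $(X,\cB,\mu,T^m)$, and both are genuinely false without ergodicity: for a non-ergodic system $\lim_k\frac{1}{k}\log\cN(\alpha_0^{k-1},\epsilon)$ depends on $\epsilon$ and need not equal $h_\mu(\alpha)$ (consider a disjoint union of a zero-entropy and a positive-entropy component). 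Passing to the ergodic decomposition of $T^m$ does not rescue the computation as written either, since the ergodic components of $T^m$ are the normalized restrictions of $\mu$ to the $d$ cyclically permuted pieces, and the counting quantities $\cN(\cdot,\epsilon,\mu_x)$ for those measures are not the same as $\cN(\cdot,\epsilon,\mu)$; relating them would require additional work that you have not supplied. So the proposal proves the theorem only for totally ergodic systems; for the general ergodic case a different argument (such as the paper's direct tower-and-packing construction) is needed.
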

We already know that $h_\mu^+(\cU)\ge h_\mu^-(\cU)$ $(Proposition\;3.6)$,
so we only need to prove the other inequality.
Before we turn to the proof, let us present some terminology and prove a
combinatorial lemma.\\
Let $\Lambda$, be a finite alphabet of $M$ letters, $k,n\in\fN\;k>>n$,
$0<\delta<1$ and $\omega=\omega_0^{k-1}$, a word of length $k$ on
$\Lambda$. (The symbol $a_r^s$ stands for $a_r\dots a_s$).
Denote $\Gamma=\Lambda^n$.
\begin{itemize}
\item An $(n,k,\delta)$-packing is a pair $\cC=(i_0^{m-1},\gamma_0^{m-1})$
where $0\le i_j\le k-n,\;\gamma_j\in\Gamma,\; j=0\dots m-1,\;
i_j+n-1<i_{j+1}$
and $\frac{m\cdot n}{k}>1-\delta$. (We think of an $(n,k,\delta)$-packing
as instructions to "almost" write a word of length $k$, we just fill it
with the $\gamma_j$'s, where $\gamma_j$ starts in the $i_j$ letter and
there will be no more than $\delta k$ letters to add.)
\item An $(n,k,\delta)$-packing for $\omega$, is an
$(n,k,\delta)$-packing, $\cC=(i_0^{m-1},\gamma_0^{m-1})$, such that
$\omega_{i_j}^{i_j+n-1}=\gamma_{j}$.
\item if $\mu_1,\mu_2$ are probability distributions on $\Gamma$ then
\[||\mu_1-\mu_2||=\max_\gamma|\mu_1(\gamma)-\mu_2(\gamma)|.\]
\item An $(n,k,\delta)$-packing, $\cC=(i_0^{m-1},\gamma_0^{m-1})$, induces
a probability distribution on $\Gamma$, denoted by $P_\cC$, by the formula
$P_\cC(\gamma)=\frac{1}{m}\#\{0\le j\le m-1\;|\;\gamma=\gamma_j\}$.
\item If $\mu$ is a probability distribution on $\Gamma$ and $\cC$ is an
$(n,k,\delta)$-packing, then we say that $\cC$ is $(n,k,\delta,\mu)$, if
$||\mu-P_\cC||<\delta$. We say that $\omega$ is $(n,k,\delta,\mu)$, if
there is an $(n,k,\delta)$-packing for $\omega$, which is
$(n,k,\delta,\mu)$.
\end{itemize}
\begin{lem}
If $\mu$ is a probability distribution on $\Gamma$, with "average
entropy"
\[h_0=-\frac{1}{n}\sum_{\gamma\in\Gamma}\mu(\gamma)log\mu(\gamma)\]
then there exists a positive function $\varphi(\delta)$, such that
$\varphi(\delta)\to 0$ as $\delta\to 0$ and such that if
$0<\delta<\frac{1}{2}$, then for any $k>n$, the number of
words $\omega\in\Lambda^k$, which are $(n,k,\delta,\mu)$, is at most
$2^{k(h_0+\varphi(\delta))}$.
 \end{lem}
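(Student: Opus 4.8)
The plan is to estimate the number of words $\omega \in \Lambda^k$ that are $(n,k,\delta,\mu)$ by first counting the possible "scaffolding" — i.e. the choice of which positions $i_0 < i_1 < \dots < i_{m-1}$ are occupied by the blocks of an $(n,k,\delta)$-packing — and then, for each fixed scaffolding, counting the number of ways to fill in the blocks $\gamma_0, \dots, \gamma_{m-1}$ subject to the constraint $\|\mu - P_\cC\| < \delta$, and finally accounting for the at most $\delta k$ leftover letters, which can be filled in at most $M^{\delta k}$ ways. Since $M^{\delta k} = 2^{k\delta \log M}$, this last factor is already of the desired form $2^{k \cdot (\text{something} \to 0)}$.

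For the scaffolding count: a choice of the occupied block-positions corresponds (as in the proof of Theorem 4.2, via the $1$--$1$ correspondence between a separated collection and its complement) to choosing a subset of $[0,k-1]$ of size $k - mn \le \delta k$; by Lemma 2.3, once $\delta < \tfrac12$ this is at most $2^{k H(\delta)}$, and $H(\delta) \to 0$ as $\delta \to 0$. For the block-filling count: the constraint $\|\mu - P_\cC\| < \delta$ forces the empirical distribution $P_\cC$ of the sequence $\gamma_0 \dots \gamma_{m-1} \in \Gamma^m$ to lie within $\delta$ (in sup-norm over $\Gamma$, hence within $|\Gamma|\delta = M^n \delta$ in $\ell^1$) of $\mu$. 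By a standard type-counting / Stirling estimate, the number of sequences in $\Gamma^m$ whose empirical distribution is within $\ell^1$-distance $M^n\delta$ of $\mu$ is at most $2^{m(\bar H(\mu) + \kappa(M^n\delta))}$, where $\bar H(\mu) = -\sum_{\gamma}\mu(\gamma)\log\mu(\gamma) = n h_0$ and $\kappa(t) \to 0$ as $t \to 0$ (using uniform continuity of the Shannon entropy function on the simplex in $\fR^{M^n}$, together with the polynomial bound on the number of types). Since $mn \le k$, this is at most $2^{k(h_0 + \kappa(M^n\delta)/n)}$.

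Multiplying the three bounds gives that the number of $(n,k,\delta,\mu)$-words is at most
\[
2^{kH(\delta)} \cdot 2^{k(h_0 + \kappa(M^n\delta)/n)} \cdot 2^{k\delta\log M}
= 2^{k(h_0 + \varphi(\delta))},
\]
where $\varphi(\delta) = H(\delta) + \kappa(M^n\delta)/n + \delta \log M$. Since $n$, $M$ are fixed and $H(\delta), \kappa(M^n\delta), \delta\log M$ all tend to $0$ as $\delta \to 0$, the function $\varphi$ is positive and satisfies $\varphi(\delta)\to 0$, as required.

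The main obstacle is the block-filling estimate: one must be careful that $\delta$ measures closeness in the sup-norm on $\Gamma = \Lambda^n$, so translating it into a usable $\ell^1$ bound costs a factor $M^n$, and then one needs the quantitative continuity of entropy on a simplex of dimension $M^n$ — both harmless since $n$ and $M$ are held fixed throughout, but worth stating explicitly. A minor subtlety is that the number of distinct block-sequences $\gamma_0\dots\gamma_{m-1}$ counted per scaffolding does not depend on which positions $i_j$ were chosen, so the product of the three counts is legitimate; and words with different $m$ are handled by noting $m$ ranges over at most $k$ values, contributing a harmless extra factor $k$ that is absorbed into $\varphi$ after the $\tfrac1k\log$ (or, more cleanly, by enlarging $\varphi$ by $\tfrac1n$ — but since we only need $\limsup$-type behavior this is immaterial).
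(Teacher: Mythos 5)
Your decomposition is exactly the paper's: map each admissible word to an $(n,k,\delta,\mu)$-packing, count the index sequences ("scaffoldings") via the complement trick and Lemma 2.3 (factor $2^{kH(\delta)}$), count the block sequences by an entropy estimate, and pay $M^{\delta k}$ for the unconstrained leftover letters. The one real divergence is the block-counting step, and that is where there is a genuine, though localized and fixable, gap. The standard type-class bound is: the number of $m$-sequences of a \emph{fixed} type $P$ is at most $2^{mH(P)}$, while the number of types is at most $(m+1)^{|\Gamma|}$. Summing over the types within $\ell^1$-distance $M^n\delta$ of $\mu$ therefore yields $(m+1)^{M^n}\cdot 2^{m(\bar H(\mu)+\kappa(M^n\delta))}$, not $2^{m(\bar H(\mu)+\kappa(M^n\delta))}$; you cite "the polynomial bound on the number of types" but then omit the prefactor from the final estimate. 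That prefactor contributes $M^n\log(m+1)/k$ to the per-letter exponent, which vanishes as $k\to\infty$ but is bounded below by a positive constant (depending only on $n,M$) over the full range $k>n$, $m\ge 1$; it cannot be absorbed into a $\varphi(\delta)$ tending to $0$ with $\delta$, so the lemma as stated --- a bound $2^{k(h_0+\varphi(\delta))}$ valid for \emph{every} $k>n$ --- is not quite what your estimate delivers (it suffices for the application in Theorem 4.5, where only $k\to\infty$ matters, but the statement asks for more).

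The repair is precisely the paper's version of this step: bound all admissible $\gamma_0^{m-1}$ at once using the product measure $\nu=\mu^{\otimes m}$ on $\Gamma^m$. Each sequence whose empirical distribution is within $\delta$ of $\mu$ satisfies $\nu(\gamma_0^{m-1})=2^{m\sum_\gamma P(\gamma)\log\mu(\gamma)}\ge 2^{-m(nh_0+\psi(\delta))}$, by continuity of the \emph{linear} functional $\vec x\mapsto\sum_\gamma x_\gamma\log\mu(\gamma)$ rather than of the Shannon entropy; since $\nu$ has total mass $1$, there are at most $2^{m(nh_0+\psi(\delta))}\le 2^{k(h_0+\psi(\delta)/n)}$ such sequences, with no polynomial prefactor and uniformly in $m$ and $k$. (Your route is, in exchange, insensitive to symbols $\gamma$ with $\mu(\gamma)=0$ that the empirical distribution may still charge up to $\delta$, a case the product-measure computation must treat with some care.) Everything else --- the sup-norm to $\ell^1$ conversion, the $M^{\delta k}$ factor, the assembly of $\varphi$ --- is fine, and your closing worry about summing over $m$ is unnecessary: the count $\sum_{j<\delta k}\binom{k}{j}$ already ranges over all admissible $m$ at once, since the complement of the union of the blocks determines the blocks (all of length exactly $n$) and hence $m$.
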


\begin{proof}
Fix $k>n$. We want to estimate the number of words
$\omega=\omega_0^{k-1}\in\Lambda^k$, that are $(n,k,\delta,\mu)$. For
every such word, $\omega$, we can choose an $(n,k,\delta)$-packing,
$\cC=(i_0^{m-1},\gamma_0^{m-1})$ which is $(n,k,\delta,\mu)$. In this way
we define a map
\[\pi:\{\omega\in\Lambda^k\;|\;\omega\;is\;(n,k,\delta,\mu)\}\rightarrow
\{\cC\;|\;\cC\;is\;an\;(n,k,\delta,\mu)-packing\}\]
If $\cC=(i_0^{m-1},\gamma_0^{m-1})$, is an $(n,k,\delta)$-packing, then
$\frac{n\cdot m}{k}>1-\delta$. This means that $|\pi^{-1}(\cC)|\le
|\Lambda|^{\delta k}=M^{\delta k}$. So we have that
\[\#\{\omega\in\Lambda^k\;|\;\omega\;is\;(n,k,\delta,\mu)\}\le
M^{\delta k}\#\{\cC\;|\;\cC\;is\;an\;(n,k,\delta,\mu)-packing\}.\]
Let us now estimate the number of $(n,k,\delta,\mu)$-packings,
$\cC=(i_0^{m-1},\gamma_0^{m-1})$:\\
The number of sequences, $i_0^{m-1}$
, such that $0\le i_j\le k-n$,
$i_j+n-1<i_{j+1}$ and $\frac{m\cdot n}{k}>1-\delta$ is at most
$\sum_{j<\delta k}\binom{k}{j}$. From $lemma\;2.3$ we know that for
$\delta<\frac{1}{2}$, this sums to
something $\le 2^{H(\delta)k}$. \\
Fix such a sequence $i_0^{m-1}$. Let us now estimate the number of
sequences, $\gamma_0^{m-1}$, such that the $(n,k,\delta)$-packing,
$\cC=(i_0^{m-1},\gamma_0^{m-1})$, is $(n,k,\delta,\mu)$.\\
Denote $\nu=\otimes_1^m\mu$, the product measure on $\Gamma^m$. If
$\gamma_0^{m-1}\in\Gamma^m$, then
\[\nu(\gamma_0^{m-1})=\prod_{\gamma\in\Gamma}\mu(\gamma)^{\#\{0\le j
\le m-1\;|\;\gamma=\gamma_j\}}=
2^{\sum_{\{\gamma|\mu(\gamma)\ne 0\}}\#\{0\le j
\le m-1\;|\;\gamma=\gamma_j\}\cdot log\mu(\gamma)}\]
\[=2^{m\sum_{\{\gamma|\mu(\gamma)\ne 0\}}\frac{1}{m}\#\{0\le j
\le m-1\;|\;\gamma=\gamma_j\}\cdot log\mu(\gamma)}.\]
Now, the function $f:\{(x_\gamma)_{\gamma\in\Gamma}\in\fR^\Gamma\;|\;
\sum x_\gamma=1\}\rightarrow\fR$, defined by
\[f(\vec{x}_\gamma)=
\sum_{\{\gamma|\mu(\gamma)\ne 0\}}x_\gamma\cdot log\mu(\gamma)\]
is continuous and so there is a positive function $\psi(\delta)$, such
that $\psi(\delta)\rightarrow 0$ as $\delta\rightarrow 0$ and if
$\max_\gamma|x_\gamma-\mu(\gamma)|<\delta$, then $|f(\vec{x}_\gamma)-
f(\vec{\mu(\gamma)})|<\psi(\delta)$ (note that $\psi$ depends only on
$n,\;\mu$). So if $\gamma_0^{m-1}\in\Gamma^m$
is such that $\cC=(i_0^{m-1},\gamma_0^{m-1})$, is a
$(n,k,\delta,\mu)$-packing, it follows that
\[\nu(\gamma_0^{m-1})=2^{m\sum_{\{\gamma|\mu(\gamma)\ne
0\}}\frac{1}{m}\#\{0\le j
\le m-1\;|\;\gamma=\gamma_j\}\cdot log\mu(\gamma)}\]
\[\ge
2^{m\big(\sum_{\{\gamma|\mu(\gamma)\ne 0\}}\mu(\gamma)log\mu(\gamma)-
\psi(\delta)\big)}\ge
2^{k(-h_0-\frac{\psi(\delta)}{n})}\]
Where the last inequality follows from the fact that $m<\frac{k}{n}$ and
the definition of $h_0$.
We conclude that an upper bound for the number of such sequences
$\gamma_0^{m-1}$ is $2^{k(h_0+\frac{\psi(\delta)}{n})}$. If we collect
these estimations, we get to the conclusion that for
$0<\delta<\frac{1}{2}$
\[
\#\{\omega\in\Lambda^k\;|\;\omega\;is\;(n,k,\delta,\mu)\}
\le
M^{\delta k}\cdot 2^{H(\delta)k}\cdot 2^{k(h_0+\frac{\psi(\delta)}{n})}
\le
2^{k(h_0+\frac{\psi(\delta)}{n}+H(\delta)+\delta\cdot logM)}\]
so $\varphi(\delta)=\frac{\psi(\delta)}{n}+H(\delta)+\delta\cdot logM$ is
our desired function.

 \end{proof}

\begin{proof}
$(of\;theorem\;4.5)$: We want to show that for an ergodic system
$(X,\cB,\mu,T)$ and a cover $\cU=\{U_1\dots U_M\}$ of $X$, we have
$h_\mu^+(\cU)\le h_\mu^-(\cU)$. As before, if the system is periodic,
then,
from the ergodicity, it must be a cyclic permutation on a finite set of
atoms. Therefore $h_\mu^+(\cU)=h_\mu^-(\cU)=0$. In the aperiodic case we
can use the Strong Rohlin Lemma.\\
Let $\epsilon>0$. We shall show that $h_\mu^+(\cU)\le
h_\mu^-(\cU)+2\epsilon$. From the definition of $h_\mu^-(\cU)$, we
can find $n\in\fN$ and a partition $\beta\succeq\cU_0^{n-1}$, such
that $\frac{1}{n}H_\mu(\beta)\le h_\mu^-(\cU)+\epsilon$. As
$\beta\succeq\cU_0^{n-1}$, we can index the elements of $\beta$,
by sequences $i_0^{n-1}=i_0\dots i_{n-1}$, such that if
$\tilde{B}_{i_0^{n-1}}$, is one, then
$T^j\tilde{B}_{i_0^{n-1}}\subset U_{i_j}\;j=0\dots n-1$. We can
assume that each sequence, $i_0^{n-1}$, corresponds to, at most
one element of $\beta$, for otherwise, we could unite these
elements and get a coarser partition $\beta'$, still refining
$\cU_0^{n-1}$, such that $\frac{1}{n}H_\mu(\beta')\le
\frac{1}{n}H_\mu(\beta)\le h_\mu^-(\cU)+\epsilon$. Set
$\Gamma=\{1\dots M\}^n$. So the elements of $\beta$ are indexed by
$\Gamma$. (if $\gamma\in\Gamma$,does not correspond to an element
of $\beta$, in the above way, we set
$\tilde{B}_\gamma=\emptyset$). In this way, the partition $\beta$,
defines a probability distribution, $\nu$, on $\Gamma$, defined by
$\nu(\gamma)=\mu(\tilde{B}_\gamma)$ and we have that
$h_0=\frac{1}{n}H_\mu(\beta)$, is the "average entropy" (see Lemma
4.6) of
$\nu$.\\
Choose $\delta>0$ (in a manner specified later) and let $F$, be a base for
a strong Rohlin tower (with respect to $\beta$) of height $n$ and
error$\le\delta^2$. Denote the atoms of $\beta|_F$ by
$B_\gamma\;\gamma\in\Gamma$, (where $B_\gamma=\tilde{B}_\gamma\cap F$),
and
define a partition $\tilde{\alpha}=\{\tilde{A}_1\dots \tilde{A}_M\}$ of
$E=\bigcup_0^{n-1}T^jF$, by $\tilde{A}_m=\cup\{T^jB_{i_0^{n-1}}\;|\;
j\in\{0\dots n-1\},\;i_j=m\}$. Note that $\tilde{A}_m\subset U_m$.
Extend $\tilde{\alpha}$, to a partition $\alpha$ of $X$ refining $\cU$, in
some way.
The set of indices of elements of $\alpha$, $\Lambda$ (the alphabet in
which
$\alpha$-names are written) contains $\{1\dots M\}$ and we can always build
$\alpha$, such that $|\Lambda|\le 2M$. We slightly abuse our notation and
denote $\Gamma=\Lambda^n$. In this way, $\nu$ is still a probability
distribution on $\Gamma$.  \\
\emph{Claim}: \emph{If $\delta$, is small enough, then $h_\mu(\alpha)\le
h_0+\epsilon$}.\\
Once we prove this claim, we are done, because then
\[h_\mu^+(\cU)\le
h_\mu(\alpha)\le h_0+\epsilon\le h_\mu^-(\cU)+2\epsilon.\]
\emph{Proof of claim}: For $k>>n$, we look at the function
$f_k(x)=\frac{1}{k}\sum_0^{k-1}\chi_E(T^jx)$. We have that $0\le f_k\le
1$ and $\int f_k>1-\delta^2$. Therefore
\[\delta\cdot\mu(\{x|1-f_k(x)>1-\delta\})\le\int_{\{x|1-f_k(x)>1-\delta\}}1-f_k
\le\int 1-f_k\le\delta^2\]
\[\Rightarrow\mu(\{x|f_k(x)\ge 1-\delta\})\ge
1-\delta.\]
Denote, $G_1^k=\{x|f_k(x)\ge 1-\delta\}$. For $x\in G_1^k$, there are at
most $\delta k$ times $0\le i\le k-1$, such that $T^ix\notin E$. Define
\[G_2^k=\{x|\;|\frac{1}{k}\sum_0^{k-n}\chi_A(T^ix)-\mu(A)|<\delta,\;
A\in\beta|_F\cup\{F\}\}.\]
Let us see what can we say about the $(\alpha,[0,k-1])$-name of an
element,
$x$, of $G_1^k\cap G_2^k$. Fix such an $x$ and denote by
$i_0<\dots<i_{m-1}$, the times between $0$ to $k-n$ in which $x$ visits
$F$. We have that $0\le i_j\le k-n$, $i_j+n-1<i_{j+1}$ (that is because
the
height of the tower is $n$).
Except for at most
$2n$ times ($n$ at the beginning and $n$ at the end), $x$ visits $E$,
exactly in the times $i_j\dots i_j+n-1$, $j=1\dots m-1$. Therefore, we
must have
\[n\cdot m\ge(1-\delta)k-2n\Rightarrow \frac{n\cdot m}{k}\ge
1-(\delta+\frac{2n}{k})\]
Denote the
$(\alpha,
[0,k-1])$-name of $x$ by $\omega=\omega_0^{k-1}$ ($\omega_i\in\Lambda$),
and $\gamma_j=\omega_{i_j}
\dots\omega_{i_j+n-1}\in\Gamma$, $j=0\dots m-1$.
We have that $\cC=(i_0^{m-1},\gamma_0^{m-1})$ is an
$(n,k,\delta+\frac{2n}{k})$-packing for $\omega$. Let us now see, what can
we say about the distribution, $P_\cC$, this packing induces on
$\Gamma$.\\
For $0\le r\le k-n$, we have that $T^rx\in B_\gamma$ if and only if, there
is a $0\le j\le m-1$, such that $r=i_j$ and $\gamma=\gamma_j$. Therefore,
because $x\in G_2^k$
\begin{itemize}
\item $\forall\gamma\in\Gamma\quad|\frac{1}{k}\#\{0\le j\le
m-1|\gamma=\gamma_j\}-\mu(B_\gamma)|<\delta.$
\item $|\frac{m}{k}-\mu(F)|<\delta.$
\end{itemize}
Note that $\mu(F)>\frac{1-\delta}{n}$, so if $\delta$ is sufficiently
small,
we can guarantee that $|\frac{k}{m}-\frac{1}{\mu(F)}|$ would be
arbitrarily small and in turn we can guarantee that for every
$\gamma\in\Gamma$
\[|\frac{k}{m}\cdot\frac{1}{k}\#\{0\le j\le
m-1|\gamma=\gamma_j\}-\frac{\mu(B_\gamma)}{\mu(F)}|=|P_\cC(\gamma)-\nu(\gamma)|
\]
would be arbitrarily small. This is to say that $||P_\cC-\nu||$ is
arbitrarily small. We see that there is a positive function $\psi(\delta)$,
independent of $k$, such that $\psi(\delta)\to 0$ as $\delta\to 0$ and
such that, if $x\in
G_1^k\cap G_2^k$ and $\omega$ is its $(\alpha,[0,k-1])$-name, then
$\omega$ is $(n,k,\psi(\delta)+\frac{2n}{k},\nu)$.\\
Remember the function $\varphi$, from $lemma\;4.6$. There is an $\eta_0>0$,
such that for every $0<\eta<\eta_0\,\;\varphi(\eta)<\epsilon$.
Choose $k$ to
be large enough so that $\frac{2n}{k}<\frac{\eta_0}{2}$ and the
error, $\delta$, of the
tower to be so small, such that $\psi(\delta)<\frac{\eta_0}{2}$, and
conclude, from
$lemme\;4.6$, that the number of $(\alpha,[0,k-1])$-names of elements of
$G_1^k\cap G_2^k$ is at most $2^{k(h_0+\epsilon)}$. From the ergodicity,
we
know that for large enough $k$, $\mu(G_1^k\cap G_2^k)>1-2\delta$, so we
have
\[h_\mu(\alpha)=lim\frac{1}{k}log\cN(\alpha_0^{k-1},2\delta)\le
h_0+\epsilon.\]
as desired.

\end{proof}

Remarks:
\begin{itemize}
\item If $(X,T)$, is totally ergodic, i.e $(X,T^n)$, is ergodic
for every $n\in\fN$, then we can look at expressions like
$h_\mu^e(\cU_0^{n-1},T^n)$. It follows from the definition that
$h_\mu^e(\cU,T)=\frac{1}{n}h_\mu^e(\cU_0^{n-1},T^n)$. This enables
us to prove the last theorem without any hard work done. We know
from $theorem\;4.4$, that $h_\mu^e(\cU,T)=h_\mu^+(\cU,T)$ and
therefore $h_\mu^+(\cU,T)=\frac{1}{n}h_\mu^+(\cU_0^{n-1},T^n)$.
But then, $proposition\;3.6$ (which  is elementary), gives:
$h_\mu^-(\cU,T)=lim\frac{1}{n}h_\mu^+(\cU_0^{n-1},T^n)=h_\mu^+(\cU,T)$
and this gives the desired result. \item The definitions of
$h_\mu^+(\cU),h_\mu^-(\cU)$, were introduced in [R] and discussed
also in [Ye], [HMRY]. There, a proof of their equality was given
only in the case where $(X,T)$, is a t.d.s, and $\cU$ is an open
cover. The proof was based on a reduction to a uniquely ergodic
case and then a use of a variational inequality, proved in [GW].
\item The definition of $h_\mu^e(\cU)$ is new. This definition
helps us to prove directly a slight generalization of the
variational inequality ,proved in [GW] and mentioned above, to the
non-topological case. ($Theorem\;6.1)$.

\item The proofs of theorems 4.2, 4.4, 4.5 and lemma 4.6 are based on ideas
of B.Weiss and E.Glasner

 \end{itemize}

\section{Ergodic decomposition for $h_\mu^+,h_\mu^-$}

\begin{thm}
$(Proposition\;5$ in [HMRY]): Let $\cU=\{U_1\dots U_M\}$, be a cover of
$X$, and $\mu=\int\mu_xd\mu(x)$, the ergodic decomposition of $\mu$ with
respect to $T$. Then
\[h_\mu^+(\cU,T)=\int h_{\mu_x}^+(\cU,T)d\mu(x)\qquad
h_\mu^-(\cU,T)=\int h_{\mu_x}^-(\cU,T)d\mu(x)\]
\end{thm}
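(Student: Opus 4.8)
The plan is not to prove each equality by a separate two-sided argument, but to chain together three one-sided inequalities. Write $\cI$ for the $\sigma$-algebra of $T$-invariant sets, so that $\mu=\int\mu_x\,d\mu(x)$ is the disintegration over $\cI$ and each $\mu_x$ is ergodic; by Theorems 4.2 and 4.4, $h_{\mu_x}^+(\cU,T)=\lim_n\frac{1}{n}\log\cN(\cU_0^{n-1},\frac{1}{2},\mu_x)$ for $\mu$-a.e.\ $x$, and since for each finite subfamily $\cF\subseteq\cU_0^{n-1}$ the set $\{\nu:\nu(X\setminus\bigcup\cF)<\frac{1}{2}\}$ is Borel in $\nu$, the functions $x\mapsto h_{\mu_x}^{\pm}(\cU,T)$ are $\mu$-measurable (and bounded by $\log M$). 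The first inequality is soft: if $\beta\succeq\cU_0^{N-1}$, concavity of $t\mapsto-t\log t$ together with $\mu(B)=\int\mu_x(B)\,d\mu(x)$ gives $H_\mu(\beta)\ge\int H_{\mu_x}(\beta)\,d\mu(x)\ge\int H_{\mu_x}(\cU_0^{N-1})\,d\mu(x)$; taking the infimum over $\beta$, dividing by $N$, letting $N\to\infty$, and applying dominated convergence (with dominating constant $\log M$, since $H_{\mu_x}(\cU_0^{N-1})\le N H_{\mu_x}(\cU)\le N\log M$ by Corollary 3.3) yields $h_\mu^-(\cU,T)\ge\int h_{\mu_x}^-(\cU,T)\,d\mu(x)$.

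The crux is the reverse inequality for $h^+$, namely $h_\mu^+(\cU,T)\le\int h_{\mu_x}^+(\cU,T)\,d\mu(x)$. First I would pass to \emph{sub-$\cU$ partitions}: given $\alpha\succeq\cU$, assign each atom $A$ of $\alpha$ an index $j(A)$ with $A\subseteq U_{j(A)}$ and merge, obtaining a partition $\{V_1,\dots,V_M\}$ with $V_j\subseteq U_j$ that is coarser than $\alpha$, still refines $\cU$, and has no larger entropy. Fix once and for all a countable subalgebra of $\cB$ containing $U_1,\dots,U_M$ and generating $\cB$, and let $\{\alpha_k\}_{k\in\fN}$ be the sub-$\cU$ partitions with atoms in it; then $h_\nu^+(\cU,T)=\inf_k h_\nu(\alpha_k,T)$ for \emph{every} Borel probability $\nu$ (this uses the coarsening just described and the continuity of the entropy of a partition in the Rohlin metric). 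The decisive point is that $h_{\mu_x}(\alpha_k,T)$ depends only on the restriction of $\alpha_k$ to the ergodic component of $x$. So, given $\epsilon>0$, put $k(x)=\min\{k:h_{\mu_x}(\alpha_k,T)<h_{\mu_x}^+(\cU,T)+\epsilon\}$, which is $\cI$-measurable by the measurability noted above, and let $\alpha$ be the sub-$\cU$ partition whose $j$-th atom is $\bigcup_k\big(k(\cdot)^{-1}(k)\cap(j\text{-th atom of }\alpha_k)\big)$; since $k(\cdot)^{-1}(k)\in\cI$, the partition $\alpha$ restricted to the ergodic component of $x$ coincides with $\alpha_{k(x)}$ restricted to it, whence $h_{\mu_x}(\alpha,T)=h_{\mu_x}(\alpha_{k(x)},T)<h_{\mu_x}^+(\cU,T)+\epsilon$ for $\mu$-a.e.\ $x$. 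By the classical ergodic decomposition of the entropy of a single partition,
\[
h_\mu^+(\cU,T)\ \le\ h_\mu(\alpha,T)\ =\ \int h_{\mu_x}(\alpha,T)\,d\mu(x)\ <\ \int h_{\mu_x}^+(\cU,T)\,d\mu(x)+\epsilon,
\]
and letting $\epsilon\to0$ finishes this step. (Conversely, for each $\alpha\succeq\cU$ the same decomposition identity gives $h_\mu(\alpha,T)=\int h_{\mu_x}(\alpha,T)\,d\mu(x)\ge\int h_{\mu_x}^+(\cU,T)\,d\mu(x)$; taking the infimum over $\alpha$ shows $h_\mu^+(\cU,T)\ge\int h_{\mu_x}^+(\cU,T)\,d\mu(x)$, so in fact $h_\mu^+(\cU,T)=\int h_{\mu_x}^+(\cU,T)\,d\mu(x)$.)

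Now close the loop. Since $\mu_x$ is ergodic for $\mu$-a.e.\ $x$, Theorems 4.4 and 4.5 give $h_{\mu_x}^+(\cU,T)=h_{\mu_x}^-(\cU,T)$ a.e., so the two integrals above are equal; combining this with the soft inequality and with Proposition 3.6(1),
\[
\int h_{\mu_x}^-(\cU,T)\,d\mu(x)\ \le\ h_\mu^-(\cU,T)\ \le\ h_\mu^+(\cU,T)\ =\ \int h_{\mu_x}^+(\cU,T)\,d\mu(x)\ =\ \int h_{\mu_x}^-(\cU,T)\,d\mu(x),
\]
so all four quantities coincide. This is Theorem 5.1 (and simultaneously Corollary 5.2).

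The main obstacle is the second paragraph: the real content is that a near-optimal sub-$\cU$ partition can be chosen so as to vary measurably with the ergodic component. The device above --- pre-selecting a countable family that realizes the infimum for every $\mu_x$ at once and then taking the least admissible index --- reduces the selection to something trivial, but its legitimacy rests on two standard facts that must be checked with some care: that a fixed countable generating subalgebra is dense in the measure algebra of $(X,\cB,\nu)$ for every Borel $\nu$, and that partition entropy is continuous in the Rohlin metric. Everything else --- the coarsening step, the concavity estimate, the two dominated-convergence passages, and the ergodic decomposition of the entropy of a partition --- is routine. One could instead avoid Theorems 4.4--4.5 in the final step and derive the $h^-$ statement directly by applying the $h^+$ case to $(X,\cB,\mu,T^n)$ with the cover $\cU_0^{n-1}$ and invoking Proposition 3.6(3); this is viable but forces one to track the passage between the $T$- and $T^n$-ergodic decompositions, which the route above sidesteps.
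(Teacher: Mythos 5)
The paper does not actually prove this statement: it is imported verbatim as Proposition~5 of [HMRY], so there is no internal proof to compare against. Your argument is, as far as I can check, a correct self-contained proof, and its two halves are worth distinguishing. The $h^+$ half --- coarsening to sub-$\cU$ partitions, fixing a countable generating algebra containing $U_1,\dots,U_M$, extracting a countable family $\{\alpha_k\}$ realizing $h_\nu^+(\cU,T)=\inf_k h_\nu(\alpha_k,T)$ simultaneously for all invariant $\nu$, selecting $k(x)$ measurably, gluing along the sets $k(\cdot)^{-1}(k)\in\cI$, and invoking the classical identity $h_\mu(\alpha,T)=\int h_{\mu_x}(\alpha,T)\,d\mu(x)$ --- is the natural route and is essentially the argument one finds in [HMRY]. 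The $h^-$ half is where you genuinely diverge: rather than proving $h_\mu^-(\cU,T)\le\int h_{\mu_x}^-(\cU,T)\,d\mu(x)$ directly, you close the loop through Theorems 4.4--4.5 (the ergodic-case equality $h^+=h^-$) and Proposition 3.6(1). This is legitimate within this paper --- Section 4 is proved independently of Section 5, so there is no circularity --- and it buys you Theorem 5.1 and Corollary 5.2 in a single stroke; but it is not a proof [HMRY] could have given, since the ergodic-case equality was precisely the open question, and it makes your Theorem 5.1 logically downstream of the paper's hardest results rather than an independent input as the paper's narrative presents it. Three small points to tighten: (i) the approximating partition from the countable algebra must itself remain sub-$\cU$ --- this works because you put $U_1,\dots,U_M$ into the algebra and can replace an approximant $W_j$ by $W_j\cap U_j$ before disjointifying, but it should be said; (ii) the measurability needed for $k(x)$ is that of $x\mapsto h_{\mu_x}(\alpha_k,T)$ for each fixed $k$ (via $\inf_N\frac{1}{N}H_{\mu_x}((\alpha_k)_0^{N-1})$ and measurability of $x\mapsto\mu_x(C)$), not only of $x\mapsto h_{\mu_x}^\pm(\cU,T)$ as your phrasing suggests; (iii) in the soft inequality you can avoid both dominated convergence and the measurability of $x\mapsto H_{\mu_x}(\cU_0^{N-1})$ by bounding $H_{\mu_x}(\beta)\ge H_{\mu_x}(\cU_0^{N-1})\ge N h_{\mu_x}^-(\cU,T)$ pointwise before integrating, and ``every Borel probability $\nu$'' should read ``every $T$-invariant Borel probability $\nu$''. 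None of these is a gap in substance.
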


\begin{cor}
$h_\mu^+(\cU)=h_\mu^-(\cU)$
\end{cor}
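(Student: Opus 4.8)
The plan is to deduce Corollary 5.2 from the preceding results by combining the ergodic decomposition (Theorem 5.1) with the equality of $h_\mu^+$ and $h_\mu^-$ in the ergodic case (Theorem 4.5). First I would recall that Theorem 5.1 gives the two integral formulas
\[h_\mu^+(\cU,T)=\int h_{\mu_x}^+(\cU,T)\,d\mu(x),\qquad h_\mu^-(\cU,T)=\int h_{\mu_x}^-(\cU,T)\,d\mu(x),\]
where $\mu=\int\mu_x\,d\mu(x)$ is the ergodic decomposition of $\mu$ with respect to $T$, so each $\mu_x$ is $T$-invariant and ergodic.

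Next I would apply Theorem 4.5 pointwise: for $\mu$-almost every $x$, the system $(X,\cB,\mu_x,T)$ is ergodic, hence $h_{\mu_x}^+(\cU,T)=h_{\mu_x}^-(\cU,T)$. Integrating this $\mu$-almost-everywhere equality over $x$ and using the two displayed formulas then yields
\[h_\mu^+(\cU,T)=\int h_{\mu_x}^+(\cU,T)\,d\mu(x)=\int h_{\mu_x}^-(\cU,T)\,d\mu(x)=h_\mu^-(\cU,T),\]
which is the assertion of the corollary.

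The only point that needs a word of care — and the closest thing to an obstacle — is measurability and integrability of the two functions $x\mapsto h_{\mu_x}^\pm(\cU,T)$, together with the fact that a $\mu$-null set of "bad" $x$ (where $\mu_x$ fails to be ergodic, or where the pointwise equality is not yet justified) does not affect the integrals. Both of these are already implicit in the statement of Theorem 5.1, since that theorem asserts the integral formulas make sense; so I would simply invoke Theorem 5.1 for the well-definedness and Theorem 4.5 for the integrand identity, and conclude. One may then, as the paper notes, write $h_\mu(\cU,T)$ (or $h_\mu(\cU)$) for this common value.
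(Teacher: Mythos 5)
Your argument is exactly the paper's: Theorem 5.1 reduces both $h_\mu^+(\cU)$ and $h_\mu^-(\cU)$ to integrals over the ergodic components, and Theorem 4.5 makes the two integrands agree $\mu$-almost everywhere. The paper states this in one line; your version merely spells out the same deduction, so it is correct and takes the same route.
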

\begin{proof}
It follows immediately from the above and the ergodic case
$(Theorem\;4.5)$
\end{proof}
From now on we will denote the number
$h_\mu^+(\cU,T)=h_\mu^-(\cU,T)(=h_\mu^e(\cU,T)$ in
the ergodic case), simply by $h_\mu(\cU,T)$ or $h_\mu(\cU)$ or $h(\cU)$,
when no ambiguity can occur.


\section{Variational relations}
As always, let $\cU=\{U_1\dots U_M\}$, be a cover of the m.t.d.s
$(X,\cB,\mu,T)$. We can define the "$\emph{combinatorial\;entropy}$" of
$\cU$
as
\[h_{c}(\cU,T)=lim_n\frac{1}{n}log\cN(\cU_0^{n-1})\]
where, $\cN(\cV)$, is the minimum number of elements of $\cV$, needed to
cover the whole space. Note that the sequence $log\cN(\cU_0^{n-1})$, is
sub-additive, hence the limit exists. If $(X,T)$ is a t.d.s and $\cU$ is an open cover then we denote $h_{top}(\cU,T)=h_{c}(\cU,T)$.

The next theorem was proved in [GW] for topological dynamical systems and
measurable covers. We give here a simple proof for the non topological case
that uses the definition of
$h_\mu^e(\cU)$.
\begin{thm}
$h_\mu(\cU)\le h_{c}(\cU)$.
\end{thm}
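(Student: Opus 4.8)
The plan is to use the characterization $h_\mu(\cU)=h_\mu^e(\cU)$ in the ergodic case (Theorem 4.4) together with the ergodic decomposition (Theorem 5.1, giving Corollary 5.2), so that it suffices to prove the inequality $h_\mu^e(\cU)\le h_c(\cU)$ for an ergodic m.t.d.s. The key observation is essentially trivial once stated: for \emph{every} $n$ and every $0<\epsilon<1$ one has $\cN(\cU_0^{n-1},\epsilon,\mu)\le\cN(\cU_0^{n-1})$, because any subcollection of $\cU_0^{n-1}$ that covers all of $X$ in particular covers $X$ up to a set of $\mu$-measure $<\epsilon$. Taking logarithms, dividing by $n$, and letting $n\to\infty$, the left side converges to $h_\mu^e(\cU)$ by Theorem 4.2/4.4, while the right side converges to $h_c(\cU)$ by subadditivity (noted just before the theorem statement). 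Hence $h_\mu^e(\cU)\le h_c(\cU)$.

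Concretely I would carry out the steps in this order. First, reduce to the ergodic case: write the ergodic decomposition $\mu=\int\mu_x\,d\mu(x)$; by Corollary 5.2 and Theorem 5.1, $h_\mu(\cU)=\int h_{\mu_x}(\cU)\,d\mu(x)$, and since $h_c(\cU)$ does not depend on the measure at all, it is enough to show $h_{\mu_x}(\cU)\le h_c(\cU)$ for (a.e.) ergodic $\mu_x$. Second, for an ergodic system invoke Theorem 4.4 to replace $h_{\mu_x}(\cU)$ by $h_{\mu_x}^e(\cU)=\lim_n\frac1n\log\cN(\cU_0^{n-1},\epsilon,\mu_x)$. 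Third, apply the elementary inequality $\cN(\cU_0^{n-1},\epsilon,\mu_x)\le\cN(\cU_0^{n-1})$ for each $n$. Fourth, take $\frac1n\log$ of both sides and pass to the limit, using that $\frac1n\log\cN(\cU_0^{n-1})\to h_c(\cU)$ by subadditivity; this yields $h_{\mu_x}^e(\cU)\le h_c(\cU)$, and then integrating (or just using that the bound is uniform in $x$) gives $h_\mu(\cU)\le h_c(\cU)$.

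There is genuinely no hard part here; the only thing to be a little careful about is whether one wants to route through the ergodic decomposition at all. One can equally well avoid it: the inequality $\cN(\cU_0^{n-1},\epsilon,\mu)\le\cN(\cU_0^{n-1})$ holds for any probability measure $\mu$, and in the general (non-ergodic) case one may instead observe that $h_\mu(\cU)=h_\mu^-(\cU)=\lim_n\frac1n H_\mu(\cU_0^{n-1})$ and bound $H_\mu(\cU_0^{n-1})=\inf_{\alpha\succeq\cU_0^{n-1}}H_\mu(\alpha)\le\log\cN(\cU_0^{n-1})$, since a partition refining $\cU_0^{n-1}$ into at most $\cN(\cU_0^{n-1})$ atoms (obtained exactly as in the ``$\rho$-good partition'' construction with the whole space covered) has entropy at most $\log$ of its number of atoms. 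Dividing by $n$ and letting $n\to\infty$ gives the claim directly, sidestepping Theorem 5.1 entirely; this is probably the cleanest write-up. The main ``obstacle'' is purely expository: deciding which of these two equivalent one-line arguments to present, and making sure the bound $H_\mu(\alpha)\le\log(\#\alpha)$ is correctly quoted for the chosen partition.
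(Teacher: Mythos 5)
Your proposal is correct and is essentially identical to the paper's own proof: the paper also reduces to the ergodic case via Theorem 5.1, uses $h_\mu(\cU)=\lim\frac{1}{n}\log\cN(\cU_0^{n-1},\frac{1}{2})$ there, and applies the trivial bound $\cN(\cU_0^{n-1},\epsilon)\le\cN(\cU_0^{n-1})$. Your alternative route via $h_\mu^-(\cU)$ and $H_\mu(\cU_0^{n-1})\le\log\cN(\cU_0^{n-1})$ is exactly the remark the paper records immediately after its proof.
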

\begin {proof}
First, if the system is ergodic, then
$h_\mu(\cU)=lim\frac{1}{n}log\cN(\cU_0^{n-1},\frac{1}{2})$
and as $\cN(\cU_0^{n-1},\frac{1}{2})\le\cN(\cU_0^{n-1})$, we have
\[h_\mu(\cU)\le lim\frac{1}{n}log\cN(\cU_0^{n-1})=h_{top}(\cU)\]
as desired.
In the non ergodic case, let $\mu=\int\mu_xd\mu(x)$, be the ergodic
decomposition of $\mu$. By $theorem\;5.1$, $h_\mu(\cU)=\int
h_{\mu_x}(\cU)d\mu(x)$, so from the first part we see that
$h_\mu(\cU)\le h_{c}(\cU)$.
  \end{proof}
Remark: Another simple proof of the above, uses the definition of
$h_\mu^-(\cU)$:
\[H_\mu(\cU_0^{n-1})=\inf_{\alpha\succeq\cU_0^{n-1}}H_\mu(\alpha)\le
\inf_{\alpha\succeq\cU_0^{n-1}}log|\alpha|\le log\cN(\cU_0^{n-1})\]
\[\Rightarrow h_\mu(\cU)=lim\frac{1}{n}H_\mu(\cU_0^{n-1})\le
lim \frac{1}{n}log\cN(\cU_0^{n-1})=h_{c}(\cU).\]

From this stage, until the end of this paper we assume that $(X,T)$, is a
t.d.s. We denote by $\cM_T(X)$, the set of $T$-invariant
probability measures on $X$ and by $\cM_T^e(X)$, the set of ergodic ones.
Also $\cC_X^o$, will denote the set of finite open covers of $X$.

In [BGH], the following theorem was proved:
\begin{thm}
($Theorem\;1$ in [BGH]): If $\cU\in\cC_X^o$, then there exists
$\mu\in\cM_T(X)$, such that $h_\mu(\cU)\ge h_{top}(\cU).$
\end{thm}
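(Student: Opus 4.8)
The plan is to establish the lower bound $h_\mu(\cU) \ge h_{top}(\cU)$ for some invariant measure by exploiting the combinatorial characterization of $h_{top}(\cU)$ together with the variational-principle machinery developed earlier. First I would recall that $h_{top}(\cU,T) = \lim_n \frac{1}{n} \log \cN(\cU_0^{n-1})$, so for a suitable increasing sequence $n_k \to \infty$ we have $\frac{1}{n_k}\log \cN(\cU_0^{n_k-1}) \to h_{top}(\cU)$. For each $n$, pick a minimal subfamily of $\cU_0^{n-1}$ of size $\cN(\cU_0^{n-1})$ that covers $X$; this gives, for each point $x$, an $(\cU,[0,n-1])$-name (equivalently a point in a subset $W_n \subseteq \{1,\dots,M\}^n$ with $|W_n| = \cN(\cU_0^{n-1})$) that $x$ realizes. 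The idea is to turn these finitely supported ``name'' data into an invariant measure by the standard averaging trick and then read off its $\cU$-entropy.

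Concretely, I would fix $n$ large, choose one point $x_w$ for each name $w \in W_n$, form the empirical measure $\sigma_n = \frac{1}{|W_n|}\sum_{w \in W_n} \delta_{x_w}$, push it around by averaging $\mu_n = \frac{1}{n}\sum_{j=0}^{n-1} T^j_* \sigma_n$, and let $\mu$ be a weak-$*$ limit point of $\mu_{n_k}$ along the good subsequence; then $\mu \in \cM_T(X)$. To estimate $h_\mu(\cU)$ from below, I would work with a partition $\alpha \succeq \cU$ and compare $H_{\sigma_n}(\alpha_0^{n-1})$ with $\log |W_n| = \log \cN(\cU_0^{n-1})$: since the $|W_n|$ chosen points all have distinct $\cU_0^{n-1}$-names and hence lie in distinct atoms of $\alpha_0^{n-1}$, the measure $\sigma_n$ is the uniform measure on $|W_n|$ distinct atoms, so $H_{\sigma_n}(\alpha_0^{n-1}) = \log \cN(\cU_0^{n-1})$. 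The usual convexity/concavity estimates relating $H_{\sigma_n}(\alpha_0^{n-1})$ and $H_{\mu_n}(\alpha_0^{n-1})$ (the standard step in the proof of the variational principle, using subadditivity of $H$ and concavity of $H_\mu(\alpha)$ in $\mu$) then give $\frac{1}{n} H_{\mu_n}(\alpha_0^{n-1}) \ge \frac{1}{n}\log \cN(\cU_0^{n-1}) - o(1)$. Passing to the limit along $n_k$ and using upper semicontinuity of $\mu \mapsto H_\mu(\alpha_0^{q-1})$ for fixed $q$ (valid because $\cU$ is open, so the atoms of a well-chosen refining partition $\alpha$ have boundaries of $\mu$-measure zero, after perhaps perturbing $\alpha$) yields $h_\mu(\alpha,T) \ge h_{top}(\cU)$, hence $h_\mu^+(\cU) = \inf_{\alpha \succeq \cU} h_\mu(\alpha) \ge h_{top}(\cU)$, and by Corollary 5.2 this is $h_\mu(\cU)$.

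The main obstacle I anticipate is the semicontinuity/boundary issue: $h_\mu^+(\cU)$ is an infimum over partitions $\alpha$ refining $\cU$, and to push a lower bound through a weak-$*$ limit one wants $H_\mu$ to behave well, which requires choosing the refining partition $\alpha$ so that its atoms have $\mu$-null boundaries — but $\mu$ is only produced at the end, so one must either diagonalize (build $\alpha$ adapted to $\mu$ a posteriori, which is circular) or instead bound $\frac{1}{n}\log \cN(\cU_0^{n-1})$ directly against a quantity that is manifestly upper semicontinuous in $\mu$. The cleaner route is probably to avoid partitions entirely on the limiting side: show that for the limit measure $\mu$ and any $\epsilon>0$, $\cN(\cU_0^{q-1},\epsilon) \ge$ (something controlled by $\mu_{n_k}$-mass of the relevant atoms) for all large $q$, so that $h_\mu^e(\cU) = \lim_q \frac1q \log \cN(\cU_0^{q-1},\epsilon) \ge h_{top}(\cU)$, invoking Theorem 4.4 to identify $h_\mu^e$ with $h_\mu^+ = h_\mu(\cU)$; establishing that $\cN(\cU_0^{q-1},\epsilon)$-type lower bound uniformly enough to survive the limit is the delicate combinatorial point, and it is where the argument must be carried out carefully rather than by a routine appeal.
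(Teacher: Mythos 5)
The paper does not actually prove this statement --- it is quoted verbatim as Theorem 1 of [BGH] and used as a black box --- so your attempt can only be judged on its own. It has a genuine gap at its central step. You claim that the points $x_w$, $w\in W_n$, chosen one from each element of a minimal subcover of $\cU_0^{n-1}$, ``all have distinct $\cU_0^{n-1}$-names and hence lie in distinct atoms of $\alpha_0^{n-1}$'', so that $H_{\sigma_n}(\alpha_0^{n-1})=\log\cN(\cU_0^{n-1})$ for every partition $\alpha\succeq\cU$. This is false: elements of a cover overlap, a point has many $\cU$-names, and a single atom of $\alpha_0^{n-1}$ (which need only be contained in \emph{some} element of $\cU_0^{n-1}$) can contain several of the chosen points. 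Already for $n=1$ the required inequality is unattainable by \emph{any} probability measure, not just an empirical one: take $X=\{a,b,c\}$ with $T=\mathrm{id}$ and the open cover $\cU=\{\{a,b\},\{b,c\},\{a,c\}\}$, so $\cN(\cU)=2$. For $\nu=(p,q,r)$ the partitions $\{\{a,b\},\{c\}\}$, $\{\{b,c\},\{a\}\}$, $\{\{a,c\},\{b\}\}$ all refine $\cU$ and give entropies $H(r),H(p),H(q)$ in the notation of Definition 2.2; since at least one of $p,q,r$ is $\le\frac13$, we get $\inf_{\alpha\succeq\cU}H_\nu(\alpha)\le H(\frac13)<\log 2=\log\cN(\cU)$ for every $\nu$. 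So the finite-stage lower bound your averaging scheme feeds into the Misiurewicz computation simply does not exist, and this is precisely why Theorem 6.2 is a theorem of [BGH] rather than a routine adaptation of the variational-principle argument for partitions: the whole difficulty is to produce a single invariant $\mu$ beating $h_{top}(\cU)$ against \emph{all} refining partitions simultaneously, and [BGH] does this with a combinatorial lemma specific to covers, not with one point per name of a minimal subcover.

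The secondary obstacle you flag (semicontinuity of $H_\mu$ under weak-$*$ limits, boundaries of atoms) is real but is not the main problem; and the fallback you sketch in the last paragraph --- bounding $\cN(\cU_0^{q-1},\epsilon)$ for the limit measure directly and invoking Theorem 4.4 --- is exactly the part you leave unproved. As written, the proposal asserts the one step that carries all the content of the theorem and defers or misidentifies the rest, so it does not constitute a proof.
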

In light of $theorem\;6.1$ we have that for every $\cU\in\cC_X^o$, one can
find a measure $\mu\in\cM_T(X)$, such that $h_\mu(\cU)= h_{top}(\cU).$
In fact theorem 7 in [HMRY] now becomes:
\begin{cor}
for every $\cU\in\cC_X^o$, one can
find a measure $\mu\in\cM_T^e(X)$, such that $h_\mu(\cU)= h_{top}(\cU).$
\end{cor}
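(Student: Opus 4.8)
The plan is to chain together the three inputs already in hand: the variational inequality of Theorem 6.1, the existence result Theorem 6.2 (Theorem 1 of [BGH]), and the ergodic decomposition formula of Theorem 5.1. First I would apply Theorem 6.2 to the open cover $\cU$ to obtain some $\mu\in\cM_T(X)$ with $h_\mu(\cU)\ge h_{top}(\cU)$. Since Theorem 6.1 (together with $h_c(\cU)=h_{top}(\cU)$ for open covers) gives $h_\mu(\cU)\le h_{top}(\cU)$ for \emph{every} invariant measure, this particular $\mu$ must satisfy $h_\mu(\cU)=h_{top}(\cU)$. So the only remaining issue is to replace $\mu$ by one of its ergodic components without changing the entropy value.

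For that I would write the ergodic decomposition $\mu=\int\mu_x\,d\mu(x)$, with $\mu_x\in\cM_T^e(X)$ for $\mu$-a.e.\ $x$, and invoke Theorem 5.1 to get
\[ h_{top}(\cU)=h_\mu(\cU)=\int h_{\mu_x}(\cU)\,d\mu(x). \]
Now each $\mu_x$ is itself an (ergodic, hence invariant) measure, so Theorem 6.1 applies componentwise and gives $h_{\mu_x}(\cU)\le h_{top}(\cU)$ for $\mu$-a.e.\ $x$. Thus the integrand is $\mu$-a.e.\ bounded above by $h_{top}(\cU)$ while its integral equals $h_{top}(\cU)$; by the elementary fact that a function which is $\le c$ almost everywhere and integrates to $c$ equals $c$ almost everywhere, we conclude $h_{\mu_x}(\cU)=h_{top}(\cU)$ for $\mu$-a.e.\ $x$. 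This set of good $x$ has full $\mu$-measure, hence is nonempty, so picking any such $x$ and setting $\nu=\mu_x$ yields an ergodic measure with $h_\nu(\cU)=h_{top}(\cU)$, which is exactly the claim.

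I do not expect a genuine obstacle here, since all the heavy lifting is done by the previously established theorems; the argument is essentially bookkeeping plus one line of measure theory. The only points that warrant a moment's care are (i) that $x\mapsto h_{\mu_x}(\cU)$ is measurable and integrable so that the displayed identity makes sense — but this is precisely part of the content of Theorem 5.1 (Proposition 5 of [HMRY]), so nothing new is needed — and (ii) that the a.e.-equality set is nonempty, which is immediate from it having full measure, allowing an actual ergodic component to be selected rather than merely an almost-everywhere statement.
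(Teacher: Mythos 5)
Your proposal is correct and matches the paper's own argument essentially verbatim: the paper likewise combines Theorem 6.2 with Theorem 6.1 to produce an invariant $\mu$ with $h_\mu(\cU)=h_{top}(\cU)$, then applies the ergodic decomposition formula of Theorem 5.1 and the pointwise bound $h_{\mu_x}(\cU)\le h_{top}(\cU)$ to conclude equality for $\mu$-a.e.\ ergodic component. The only difference is that you spell out the ``integrand bounded by $c$ with integral $c$'' step and the nonemptiness of the full-measure set, which the paper leaves implicit.
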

\begin{proof}
Choose $\mu\in\cM_T(X)$, such that $h_\mu(\cU)= h_{top}(\cU)$, and let
$\mu=\int\mu_xd\mu(x)$, be its ergodic decomposition. We know that
\[h_{top}(\cU)=h_\mu(\cU)=\int h_{\mu_x}(\cU)d\mu(x)\]
and that $h_{\mu_x}(\cU)\le h_{top}(\cU)$. So we must have
$h_{\mu_x}(\cU)= h_{top}(\cU)$ for [$\mu$] a.e $x$.
\end{proof}
We conclude from the above, the classical variational principle:\\
First we state a technical lemma, taken from [Ye].
\begin{lem}
For any $\epsilon>0$, $\mu\in\cM_T(X)$ and $\alpha=\{A_1\dots
A_M\}\in\cP_X$, there exists an open cover $\cU\in\cC_X^o$, such that for
every partition $\beta\succeq\cU$ one has $H_\mu(\alpha|\beta)<\epsilon.$
\end{lem}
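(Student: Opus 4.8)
The plan is to reduce the statement to inner regularity of $\mu$ by choosing an open cover all of whose refining partitions have, on each atom, a distinguished ``dominant'' class of $\alpha$; the conditional entropy is then controlled by the total measure of an exceptional set that regularity makes small.

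Fix a parameter $\delta>0$, to be pinned down at the end in terms of $\epsilon$ and $M$ (we may assume $M\ge 2$, the case $M=1$ being trivial). Since $\mu$ is a Borel probability measure on a compact metric space it is inner regular, so I can pick compact sets $K_i\subseteq A_i$ with $\mu(A_i\setminus K_i)<\delta/M$ for $i=1,\dots,M$. As the $A_i$ are pairwise disjoint, so are the $K_i$, and $X\setminus\bigcup_j K_j=\bigsqcup_j(A_j\setminus K_j)$ has measure $<\delta$. Now put
\[U_i=X\setminus\bigcup_{j\ne i}K_j,\qquad i=1,\dots,M.\]
Each $U_i$ is open (the complement of a finite union of compact sets) and contains $K_i$; and since every point of $X$ lies in at most one of the $K_j$, every point lies in some $U_i$, so $\cU=\{U_1,\dots,U_M\}\in\cC_X^o$.

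The key observation is that if $\beta\succeq\cU$ and $B$ is a positive-measure atom of $\beta$, then $B\subseteq U_{i}$ for some index $i=i(B)$, hence $B\cap K_j=\emptyset$ for every $j\ne i$, and therefore $B\setminus A_i=\bigcup_{j\ne i}(B\cap A_j)\subseteq\bigcup_{j\ne i}(A_j\setminus K_j)\subseteq X\setminus\bigcup_l K_l$. Writing $q_B=\mu(B\setminus A_{i(B)})/\mu(B)$, the conditional distribution of $\alpha$ on $B$ thus puts mass $1-q_B$ on the single class $A_{i(B)}$, so the standard estimate for a distribution with one heavy coordinate gives $H_\mu(\alpha|B)\le H(q_B)+q_B\log(M-1)$ (with $H$ extended by $H(0)=H(1)=0$). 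Summing $H_\mu(\alpha|\beta)=\sum_{B}\mu(B)\,H_\mu(\alpha|B)$ over the atoms, using $\sum_B\mu(B)=1$ and concavity of $H$ (Jensen) for the first term, and using that the sets $B\setminus A_{i(B)}$ are pairwise disjoint subsets of $X\setminus\bigcup_l K_l$ for the second term, we get
\[H_\mu(\alpha|\beta)\le H\!\Big(\sum_B\mu(B)q_B\Big)+\log(M-1)\sum_B\mu(B)q_B\le H(\delta)+\delta\log(M-1),\]
valid once $\delta<\tfrac12$ (so that $H$ is increasing on $[0,\delta]$). Since $\lim_{\delta\to0}H(\delta)=0$, it suffices to have chosen $\delta<\tfrac12$ small enough that $H(\delta)+\delta\log(M-1)<\epsilon$; that choice depends only on $\epsilon$ and $M$, and the cover $\cU$ built above then works for every $\beta\succeq\cU$.

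The one genuinely non-routine point is the passage in the last display: one cannot hope to make $q_B$ small for each individual atom (an atom of tiny measure may be entirely ``error''), so instead one controls only the $\mu$-averaged error $\sum_B\mu(B)q_B$, which equals $\mu$ of a single exceptional set, and invokes concavity of $H$ to push this average inside. The remaining ingredients — inner regularity and the verification that $\{X\setminus\bigcup_{j\ne i}K_j\}_i$ is an open cover with the stated dominant-class property — are routine; note in particular that $T$-invariance of $\mu$ is never used.
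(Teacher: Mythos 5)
Your proof is correct, and it fills a gap the paper leaves open: the paper only quotes this lemma from [Ye] without proof. Your argument --- shrink the atoms $A_i$ to disjoint compacta $K_i$ by inner regularity, take $U_i=X\setminus\bigcup_{j\ne i}K_j$, and bound $H_\mu(\alpha|\beta)$ via the Fano-type estimate $H(q_B)+q_B\log(M-1)$ on each atom, pushed through Jensen so that only the averaged error $\sum_B\mu(B)q_B\le\mu(X\setminus\bigcup_l K_l)<\delta$ matters --- is exactly the standard one underlying the cited source, and every step (openness of the $U_i$, the covering property, the dominant-class property of atoms of $\beta\succeq\cU$, and the concavity argument) checks out.
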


\begin{thm}
(The Variational Principle):
\begin{alist}
\item For every $\mu\in\cM_T(X)$, $h_\mu(T)\le h_{top}(T)$.
\item $\sup_{\mu\in\cM_T^e(X)}h_\mu(T)=h_{top}(T)$.
\end{alist}
\end{thm}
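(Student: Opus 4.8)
The plan is to deduce the variational principle from the cover-entropy machinery of Sections~4--6, using two classical facts about Kolmogorov--Sinai entropy that I take as given: $h_\mu(T)=\sup_{\alpha\in\cP_X}h_\mu(\alpha,T)$ and $h_{top}(T)=\sup_{\cU\in\cC_X^o}h_{top}(\cU,T)$, together with the elementary inequality $h_\mu(\alpha,T)\le h_\mu(\beta,T)+H_\mu(\alpha|\beta)$ (which follows from $\alpha\vee\beta\succeq\alpha$ and the subadditivity of conditional entropy).

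For part~(a), fix $\mu\in\cM_T(X)$ and a finite partition $\alpha$; it suffices to bound $h_\mu(\alpha,T)$ by $h_{top}(T)$. Given $\epsilon>0$, Lemma~6.3 produces an open cover $\cU$ with $H_\mu(\alpha|\beta)<\epsilon$ for every partition $\beta\succeq\cU$. Hence $h_\mu(\alpha,T)\le h_\mu(\beta,T)+\epsilon$ for all such $\beta$, and taking the infimum over $\beta\succeq\cU$ gives $h_\mu(\alpha,T)\le h_\mu^+(\cU)+\epsilon=h_\mu(\cU)+\epsilon$. By Theorem~6.1, $h_\mu(\cU)\le h_c(\cU)=h_{top}(\cU)\le h_{top}(T)$, so $h_\mu(\alpha,T)\le h_{top}(T)+\epsilon$. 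Letting $\epsilon\to0$ and then taking the supremum over $\alpha$ yields~(a).

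For part~(b), the inequality $\sup_{\mu\in\cM_T^e(X)}h_\mu(T)\le h_{top}(T)$ is immediate from~(a). Conversely, fix $\cU\in\cC_X^o$; Corollary~6.2 gives an ergodic measure $\mu_\cU$ with $h_{\mu_\cU}(\cU)=h_{top}(\cU)$. Since a partition refining $\cU$ always exists and witnesses $h_{\mu_\cU}(\cU)=h_{\mu_\cU}^+(\cU)=\inf_{\alpha\succeq\cU}h_{\mu_\cU}(\alpha,T)\le h_{\mu_\cU}(T)$, we obtain $h_{top}(\cU)\le h_{\mu_\cU}(T)\le\sup_{\mu\in\cM_T^e(X)}h_\mu(T)$. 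Taking the supremum over $\cU\in\cC_X^o$ and invoking $h_{top}(T)=\sup_\cU h_{top}(\cU,T)$ gives the reverse inequality, hence equality.

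The only substantive inputs are Lemma~6.3 (from [Ye]) and Corollary~6.2; the crucial conceptual move is in~(a), where the uniform bound $H_\mu(\alpha|\beta)<\epsilon$ over all $\beta\succeq\cU$ is converted into a bound by the cover entropy $h_\mu^+(\cU)=h_\mu(\cU)$, after which Theorem~6.1 carries us across to $h_{top}$. I do not anticipate any real obstacle beyond correctly citing the standard Kolmogorov--Sinai facts above; the rest is bookkeeping.
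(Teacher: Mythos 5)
Your proof is correct and follows essentially the same route as the paper: part (a) converts the uniform conditional-entropy bound from the technical lemma (Lemma 6.4 in the paper's numbering, not 6.3) into $h_\mu(\alpha,T)\le h_\mu^+(\cU)+\epsilon$ and then applies Theorem 6.1, while part (b) uses the corollary producing an ergodic measure with $h_\mu(\cU)=h_{top}(\cU)$ (Corollary 6.3 in the paper) exactly as the paper does. Apart from the off-by-one citation numbers, there is nothing to add.
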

\begin{proof}
To prove $(a)$, we first show that for each $\mu\in\cM_T(X)$,
$h_\mu(T)=\sup_{\cU\in\cC_X^o}h_\mu(\cU,T)$.
If this is done, then from $theorem\;6.1$, we get
\[h_\mu(T)\le\sup_{\cU\in\cC_X^o}h_{top}(\cU,T)=h_{top}(T).\]
It follows from the definition, that for any cover $\cU$ of $X$, we have
$h_\mu(\cU,T)\le h_\mu(T)$, so one inequality is clear. For the other
inequality, fix a partition, $\alpha=\{A_1\dots A_M\}$, of $X$ and
$\epsilon>0$. We need to find an open cover, $\cU$, of $X$, such that
$h_\mu(\alpha,T)\le h_\mu(\cU,T)+\epsilon$. By the preceding lemma and
from the fact that for any $\beta\in\cP_X$ one has
$h_\mu(\alpha)\le h_\mu(\beta)+H(\alpha|\beta)$
we have $\cU\in\cC_X^o$, such that
\[h_\mu(\cU,T)=\inf_{\beta\succeq\cU}h_\mu(\beta,T)\ge
\inf_{\beta\succeq\cU}(h_\mu(\alpha,T)-H_\mu(\alpha|\beta))\ge
h_\mu(\alpha,T)-\epsilon.\]
To prove $(b)$, note that from
$(6.3)$ we know that for any $\cU\in\cC_X^o$, we can find
$\mu\in\cM_T^e(X)$, such that $h_\mu(\cU,T)=h_{top}(\cU,T)$. This gives us
\[\sup_{\mu\in\cM_T^e(X)}h_\mu(T)\ge
h_{top}(\cU,T)\Rightarrow\sup_{\mu\in\cM_T^e(X)}h_\mu(T)\ge\sup_{\cU\in\cC_X^o}
h_{top}(\cU,T)=h_{top}(T).\]
Together with $(a)$, we get equality, which is $(b)$.
 \end{proof}


\end{document}